 \newtheorem{thm}{Theorem}[section]
 \newtheorem{lem}[thm]{Lemma}
 \newtheorem{prop}[thm]{Proposition}
 \theoremstyle{definition}
 \newtheorem{defn}[thm]{Definition}
 \theoremstyle{remark}
 \newtheorem{rem}[thm]{Remark}
 \numberwithin{equation}{section}
\newcommand{\tronc}[1]{\left\lfloor #1 \right\rfloor}
\newcommand{\stronc}[1]{\left\lceil #1 \right\rceil}
\newcommand{\Chi}{\alza[.55mm]{\chi}}
\newcommand{\alza}[2][5mm]{{\raisebox{#1}{$#2$}}}
\def\sdots{\mathinner{
    \mskip.01mu\raise5pt\vbox{\kern1pt\hbox{.}}
    \mskip.01mu\raise3pt\hbox{.}
    \mskip.01mu\raise1pt\hbox{.}
\mskip1mu}}
\begin{document}
%
%
%
%
%
%
%
%
%
\title[V-cycle optimal convergence for 
DCT-III matrices]
 {V-cycle optimal convergence for 
 DCT-III matrices}
\author[C. Tablino Possio]{C. Tablino Possio}

\address{%
Dipartimento di Matematica e Applicazioni,\\
Universit\`a di Milano Bicocca,\\
via Cozzi 53\\
20125 Milano\\
Italy}

\email{cristina.tablinopossio@unimib.it}

\thanks{The work of the author was partially supported by MIUR, grant
number 2006017542.}
\subjclass{Primary 65F10, 65F15, 15A12
}

\keywords{DCT-III algebra, two-grid and multigrid iterations,
multi-iterative methods}

\dedicatory{Dedicated to Georg Heinig}
\begin{abstract}
The paper analyzes a two-grid and a multigrid method for matrices
belonging to the DCT-III algebra and generated by a polynomial
symbol. The aim is to prove that the convergence rate of the
considered multigrid method (V-cycle) is constant independent of
the size of the given matrix. Numerical examples from differential
and integral equations are considered to illustrate the claimed
convergence properties.
\end{abstract}
\maketitle
\section{Introduction}\label{sec:introdution}
In the last two decades, an intensive work has concerned the
numerical solution of structured linear systems of large
dimensions \cite{CN-SIAMrev-1996,J-2002,N-2004}. Many problems
have been solved mainly by the use of (preconditioned) iterative
solvers. However, in the multilevel setting, it has been proved
that the most popular matrix algebra preconditioners cannot work
in general (see \cite{S-LAA-2002,ST-MC-2003,NSV-TCC-2004} and
references therein). On the other hand, the multilevel structures
often are the most interesting in practical applications.
Therefore, quite recently, more attention has been focused (see
\cite{ADS-SIMAX-2004,
AD-NM-2007,CST-NLA-2005,CDST-SPIE-2002,SJC-BIT-2001,FS-SISC-1996,HS-ETNA-2002,FH-LAA-2006,HS-BIT-2006,S-NM-2002,ST-SISC-2004,NST-NLA-2005})
on the multigrid solution of multilevel structured (Toeplitz,
circulants, Hartley, sine ($\tau$ class) and cosine algebras)
linear systems in which the coefficient matrix is banded in a
multilevel sense and positive definite. The reason is due to the
fact that these techniques are very efficient, the total cost for
reaching the solution within a preassigned accuracy
being linear as the dimensions of the involved linear systems.\\
In this paper we deal with the case of matrices generated by a
polynomial symbol and belonging to the DCT-III algebra. This kind
of matrices appears in the solution of differential equations and
integral equations, see for instance
\cite{CCW-IMACS-1995,NCT-SISC-1999,ST-LAA-1999}. In particular,
they directly arise in certain image restoration problems or can
be used as preconditioners for more complicated problems in the
same field of application \cite{NCCY-LAA-2000,NCT-SISC-1999}.
\par
In \cite{CST-NLA-2005}  a Two-Grid (TGM)/Multi-Grid (MGM) Method
has been proposed and the theoretical analysis of the TGM has been
performed in terms of the algebraic multigrid theory developed by
Ruge and St\"uben \cite{RS-1987}.\\
Here, the aim is to provide general conditions under which the
proposed MGM results to be optimally convergent with a convergence
rate independent of the dimension and to perform the corresponding
theoretical analysis.\\ More precisely, for MGM we mean the
simplest (and less expensive) version of the large family of
multigrid methods, i.e. the V-cycle procedure. For a brief
description of the TGM and of the MGM (standard V-cycle) we refer
to \S \ref{sec:theory}. An extensive treatment can be found in
\cite{H-1985}, and especially in \cite{TOS-2001}.\\
In all the considered cases the MGM results to be optimal in the
sense of Definition \ref{defn:opt}, i.e. the problem of solving a
linear system with coefficient matrix $A_m$ is asymptotically of
the same cost as the direct problem of multiplying $A_m$ by a
vector.
\begin{defn}\cite{AN-1993} \label{defn:opt}
Let $\{A_m x_m = b_m\}$ be a given sequence of linear systems of
increasing dimensions. An iterative method is
  \emph{optimal} if
  \begin{enumerate}
    \item[{\rm 1.}] the arithmetic cost of each iteration is at most
      proportional to the complexity of a matrix vector product with
      matrix $A_m$,
    \item[{\rm 2.}] the number of iterations for reaching the solution within a
      fixed accuracy can be bounded from above by a constant
      independent of $m$.
  \end{enumerate}
\end{defn}
\noindent In fact, the total cost of the proposed MGM will be of
$O(m)$ operations since for any coarse level $s$ we can find a
projection operator $P_{s+1}^s$ such that
\begin{itemize}
\item the matrix vector product involving $P_{s+1}^s$ costs
$O(m_s)$ operations where $m_s=m/2^s$;
\item the coarse grid matrix $A_{m_{s+1}}=P_{s+1}^s A_{m_s}
(P_{s+1}^s)^T$ is also a matrix in the DCT III algebra generated
by a polynomial symbol and can be formed within $O(m_s)$
operations;
\item the convergence rate of the MGM is independent of $m$.
\end{itemize}
%
\par
The paper is organized as follows. In \S \ref{sec:theory} we
briefly report the main tools  regarding to the convergence theory
of algebraic multigrid methods \cite{RS-1987}. In \S
\ref{sec:multi-coseni} we consider the TGM for matrices belonging
to DCT-III algebra  with reference to some optimal convergence
properties, while \S \ref{sec:vcycle} is devoted to the
convergence analysis of its natural extension as V-cycle. In  \S
\ref{sec:numexp} numerical evidences of the claimed results are
discussed and \S \ref{sec:conclusion} deals with  complexity
issues and conclusions.
\section{Two-grid and Multi-grid methods}\label{sec:theory}
In this section we briefly report the main results  pertaining to
the convergence theory of algebraic multigrid methods.\\
Let us consider the generic linear system $A_m x_m=b_m$, where
$A_m \in \mathbb{C}^{m\times m}$ is a Hermitian positive definite
matrix and $x_m,b_m \in \mathbb{C}^{m}$. Let $m_0=m >m_1>  \ldots
>m_s> \ldots > m_{s_{\min}}$ and let $P_{s+1}^s\in
\mathbb{C}^{m_{s+1}\times m_{s}}$  be a given full-rank matrix for
any $s$. Lastly, let us denote by $\mathcal{V}_s$  a class of
iterative
methods for linear systems of dimension $m_s$.\\
According to \cite{H-1985}, the algebraic Two-Grid Method (TGM) is
an iterative method whose generic step  is  defined as follow.
\[
\begin{tabular}{lcl}
     \multicolumn{3}{c} {$x_s^{\mathrm{out}}=\mathcal{TGM}(s,x_s^{\mathrm{in}},b_s)$} \\ 
     \hline \\
    \fbox{\begin{tabular}{l} $x_s^{\mathrm{pre}}=\mathcal{V}_{s,\mathrm{pre}}^{\nu_\mathrm{pre}}(x_s^{\mathrm{in}})
    \hskip 0.75cm \phantom{pA}
    $ \end{tabular} }  && {Pre-smoothing iterations}\\
    \ \\
   \fbox{\begin{tabular}{l}
          $r_{s}=A_s x_s^{\mathrm{pre}}-b_s$ \\
          $r_{s+1}=P_{s+1}^s r_{s}$     \\
          $A_{s+1}=P_{s+1}^s A_s (P_{s+1}^s)^H$  \\
          $\mathrm{Solve\ } A_{s+1}y_{s+1}=r_{s+1}$  \\
          $\hat x_s=x_s^{\mathrm{pre}}-(P_{s+1}^s)^H y_{s+1}$\\
    \end{tabular}
    } && {Exact Coarse Grid Correction}\\
    \ \\
    \fbox{\begin{tabular}{l}$x_s^{\mathrm{out}}=\mathcal{V}_{s,\mathrm{post}}^{\nu_\mathrm{post}}(\hat
    x_s)\hskip 0.8cm \phantom{pA} $\end{tabular}} && {Post-smoothing iterations} \\ 
\end{tabular}
\]
where the dimension  $m_s$ is denoted in short by the subscript $s$.\\
In the first and last  steps a \emph{pre-smoothing iteration} and
a \emph{post-smoothing iteration} are respectively applied
 $\nu_{\rm pre}$ times and $\nu_{\rm post}$ times, according to the
chosen iterative method in the class $\mathcal{V}_s$. Moreover,
the intermediate steps define the so called \emph{exact coarse
grid correction operator}, that depends on the considered
projector operator $P_{s+1}^s$.
The global iteration matrix of the TGM is then given by
\begin{eqnarray}
 TGM_s &=& V_{s,\mathrm{post}}^{\nu_\mathrm{post}}
           CGC_s
           V_{s,\mathrm{pre}}^{\nu_\mathrm{pre}},\\
 CGC_s &=& I_s-(P_{s+1}^s)^H  A_{s+1}^{-1} P_{s+1}^s A_s \quad
A_{s+1}= P_{s+1}^s A_s (P_{s+1}^s)^H, \label{eq:CGC_s}
\end{eqnarray}
where $V_{s,\mathrm{pre}}$ and $V_{s,\mathrm{post}}$ respectively
denote the pre-smoothing and post-smoothing iteration matrices.
\par
By means of a recursive procedure, the TGM gives rise to a
Multi-Grid Method (MGM): the standard V-cycle is defined as
follows.
\[
\begin{tabular}{llll}
     \multicolumn{4}{c} {$x_s^{\mathrm{out}}=\mathcal{MGM}(s,x_s^{\mathrm{in}},b_s)$} \\ 
     \hline \\
     \texttt{if}\quad $s\le s_{\min}$& \texttt{then} \\
     \ \\
     & \fbox{\begin{tabular}{l}$\mathrm{Solve\ }
     A_{s}x_s^{\mathrm{out}}=b_{s}$ \hskip 1.7cm \phantom{pA} \end{tabular}} && {Exact solution}\\
     \ \\
     \texttt{else} &&  \\
     & \fbox{\begin{tabular}{l} $x_s^{\mathrm{pre}}=\mathcal{V}_{s,\rm pre}^{\nu_{\rm pre}}
                             (x_s^{\mathrm{in}})\hskip 1.9cm \phantom{pA} $ \end{tabular}}  && {Pre-smoothing iterations}\\
                             \ \\
     &              \fbox{\begin{tabular}{l} $r_{s}=A_s x_s^{\mathrm{pre}}-b_s$ \\
                                             $r_{s+1}=P_{s+1}^s r_{s}$     \\
                                             $y_{s+1}=\mathcal{MGM}(s+1,\mathbf{0}_{s+1},r_{s+1})$  \\
                                             $\hat x_s=x_s^{\mathrm{pre}}-(P_{s+1}^s)^H y_{s+1}$\\
     \end{tabular}}   && {Coarse Grid Correction}\\
     \ \\
     &  \fbox{\begin{tabular}{l}  $x_s^{\mathrm{out}}=\mathcal{V}_{s,\rm post}^{\nu_{\rm post}}(\hat
     x_s) \hskip 1.8cm \phantom{pA} 
     $\end{tabular}}&& {Post-smoothing iterations}\\
\end{tabular}
\]
Notice that in MGM the matrices $A_{s+1}=P_{s+1}^s A_s
(P_{s+1}^s)^H$ are more profitably formed in the so called
\emph{setup
phase} in order to reduce the computational costs.\\
The global iteration matrix of the MGM can be  recursively defined
as
\[
\begin{array}{rcl}
MGM_{s_{\min}} &=& O \in \mathbb{C}^{s_{\min} \times s_{\min}}, \\
\\
MGM_s &=& V_{s,\mathrm{post}}^{\nu_\mathrm{post}}
   \left[
   I_s-(P_{s+1}^s)^H
            \left( I_{s+1}-MGM_{s+1} \right)A_{s+1}^{-1}
   P_{s+1}^s A_s \right] V_{s,\mathrm{pre}}^{\nu_\mathrm{pre}}, \\
   \ \\
   && \hfill s=s_{\min}-1,\ldots , 0.\\
\end{array}
\]
\par
Some general conditions that ensure the convergence of an
algebraic TGM
and MGM are due to Ruge and St\"uben \cite{RS-1987}. \\
Hereafter, by $\|\cdot\|_2$ we denote the Euclidean norm on
$\mathbb{C}^{m}$ and the associated induced matrix norm over
$\mathbb{C}^{m\times m}$. If $X$ is positive definite,
$\|\cdot\|_{X}=\|X^{1/2}\cdot\|_{2}$ denotes the Euclidean norm
weighted by $X$ on $\mathbb{C}^m$ and the associated induced
matrix norm. Finally, if $X$ and $Y$ are Hermitian matrices, then
the notation $X\le Y$ means that $Y-X$ is nonnegative definite.
%
\begin{thm}[TGM convergence \cite{RS-1987}]\label{teo:conv-TGM}
Let $m_0$, $m_1$ be integers such that $m_0>m_1>0$, let $A\in
\mathbb{C}^{m_0\times m_0}$ be a positive definite matrix. Let
$\mathcal{V}_0$ be a class of iterative methods for linear systems
of dimension $m_0$ and let $P_1^0\in \mathbb{C}^{m_1\times m_0}$
be a given full-rank matrix. Suppose that there exist
$\alpha_\mathrm{pre}>0$ and $\alpha_\mathrm{post}>0$ independent
of $m_0$ such that
  \begin{subequations}\label{hp:1}
  \begin{align}\label{hp:1.pre}
\|V_{0,\mathrm{pre}}\, x\|_{A}^2 &\le
\|x\|_{A}^2-\alpha_{\mathrm{pre}} \|V_{0,\mathrm{pre}} \,
x\|_{AD^{-1}A}^2\quad \textrm{ for any }  x\in \mathbb{C}^{m_0}
\\
\|V_{0,\mathrm{post}}\, x\|_{A}^2 & \le
\|x\|_{A}^2-\alpha_{\mathrm{post}}\, \|x\|_{AD^{-1}A}^2\quad
\textrm{ for any } x\in \mathbb{C}^{m_0} \label{hp:1.post}
\end{align}
  \end{subequations}
(where $D$ denotes the main diagonal of $A$) and that there exists
$\gamma>0$ independent of $m_0$ such that
\begin{equation}\label{hp:2}
\min_{y\in \mathbb{C}^{m_1} } \| x -(P_1^0)^{H} y \|_{D}^2 \le
\gamma \| x \|_{A}^2\quad \textrm{ for any }  x\in
\mathbb{C}^{m_0}.
\end{equation}
Then, $\gamma \ge \alpha_\mathrm{post}$ and
\begin{equation}\label{tesi}
\|TGM_0 \|_{A} \le   \sqrt{ \frac{1-\alpha_\mathrm{post}/\gamma}
{1+\alpha_\mathrm{pre}/\gamma} }.
\end{equation}
\end{thm}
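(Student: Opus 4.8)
The plan is to use the fact that the exact coarse-grid correction operator
\[
CGC_0 \;=\; I_0 - (P_1^0)^H A_1^{-1} P_1^0 A,\qquad A_1 = P_1^0 A (P_1^0)^H,
\]
is the $A$-orthogonal projector onto the $A$-orthogonal complement of $\mathrm{Range}\big((P_1^0)^H\big)$. So first I would record the three structural facts: (i) $CGC_0^2=CGC_0$; (ii) $A\,CGC_0 = CGC_0^H A$, hence $CGC_0$ is $A$-self-adjoint and $CGC_0^H A\,CGC_0 = A\,CGC_0$; (iii) $P_1^0 A\,CGC_0 = 0$, so $A\,CGC_0 x$ is Euclidean-orthogonal to $\mathrm{Range}\big((P_1^0)^H\big)$ for every $x$. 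Two consequences I will use repeatedly: $\|CGC_0 x\|_A \le \|x\|_A$, and, for every $y\in\mathbb{C}^{m_1}$, $\|CGC_0 x\|_A^2 = \langle A\,CGC_0 x,\,CGC_0 x-(P_1^0)^H y\rangle$.

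The heart of the argument is a smoothing--approximation lemma: for every $x$,
\[
\|CGC_0 x\|_A^2 \;\le\; \gamma\,\|CGC_0 x\|_{AD^{-1}A}^2 .
\]
To prove it, take the identity above, insert $D^{-1/2}$ and $D^{1/2}$, and apply Cauchy--Schwarz to get $\|CGC_0 x\|_A^2 \le \|CGC_0 x\|_{AD^{-1}A}\,\|CGC_0 x-(P_1^0)^H y\|_D$; minimizing over $y$ and invoking \eqref{hp:2} with $CGC_0 x$ in place of $x$ yields $\|CGC_0 x\|_A \le \sqrt{\gamma}\,\|CGC_0 x\|_{AD^{-1}A}$, which is the claim. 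Feeding this into \eqref{hp:1.post} for any $x$ with $CGC_0 x\neq 0$ (such $x$ exist because $m_1<m_0$) forces the resulting right-hand side $(1-\alpha_{\mathrm{post}}/\gamma)\|CGC_0 x\|_A^2$ to be nonnegative, i.e. $\gamma\ge\alpha_{\mathrm{post}}$.

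Then I would assemble \eqref{tesi} by factoring, via idempotency, $TGM_0 = \big(V_{0,\mathrm{post}}^{\nu_{\mathrm{post}}} CGC_0\big)\big(CGC_0\, V_{0,\mathrm{pre}}^{\nu_{\mathrm{pre}}}\big)$ and bounding the two factors separately (the smoothing inequalities iterate over $\nu_{\mathrm{post}}$ and $\nu_{\mathrm{pre}}$ with the same constants). For the post factor, given $z$ put $\zeta=CGC_0 z$; then \eqref{hp:1.post} together with the lemma (applied to $\zeta=CGC_0\zeta$) gives $\|V_{0,\mathrm{post}}\zeta\|_A^2 \le \|\zeta\|_A^2-\alpha_{\mathrm{post}}\|\zeta\|_{AD^{-1}A}^2 \le (1-\alpha_{\mathrm{post}}/\gamma)\|\zeta\|_A^2 \le (1-\alpha_{\mathrm{post}}/\gamma)\|z\|_A^2$, so $\|V_{0,\mathrm{post}}^{\nu_{\mathrm{post}}}CGC_0\|_A \le \sqrt{1-\alpha_{\mathrm{post}}/\gamma}$. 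For the pre factor, given $z$ put $w=V_{0,\mathrm{pre}}z$; \eqref{hp:1.pre} reads $\|w\|_A^2+\alpha_{\mathrm{pre}}\|w\|_{AD^{-1}A}^2\le\|z\|_A^2$, and, using $\|w\|_A^2=\|CGC_0 w\|_A^2+\|(I_0-CGC_0)w\|_A^2$ together with the lemma, one arrives at $\|CGC_0 w\|_A^2 \le (1+\alpha_{\mathrm{pre}}/\gamma)^{-1}\|z\|_A^2$; multiplying the two factor bounds gives \eqref{tesi}.

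The step I expect to be the main obstacle is precisely the pre factor. The post factor is painless because \eqref{hp:1.post} is written with $\|x\|_{AD^{-1}A}$ of the \emph{pre-image} and the lemma outputs $\|CGC_0\zeta\|_{AD^{-1}A}$ with $\zeta$ already in the range of $CGC_0$, so everything lines up. By contrast \eqref{hp:1.pre} controls $\|V_{0,\mathrm{pre}}z\|_{AD^{-1}A}$, the $AD^{-1}A$-size of the \emph{smoothed} iterate, whereas what the lemma naturally produces is $\|CGC_0 V_{0,\mathrm{pre}}z\|_{AD^{-1}A}$, and $CGC_0$ is not a contraction in the $AD^{-1}A$-norm, so the two cannot simply be compared. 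Making the pre factor estimate go through requires keeping $w=V_{0,\mathrm{pre}}z$ coupled to $z$ throughout and exploiting the Euclidean-orthogonality of $A\,CGC_0 w$ to $\mathrm{Range}\big((P_1^0)^H\big)$ jointly with the sharp (minimized-in-$y$) form of \eqref{hp:2}, rather than any black-box sub-multiplicative bound; this asymmetry between the two smoothing hypotheses is exactly what makes the separate treatment legitimate.
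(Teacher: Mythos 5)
The paper does not prove this theorem at all: it is quoted verbatim from Ruge and St\"uben \cite{RS-1987}, so there is no internal proof to compare against. Judged on its own terms, your architecture is the standard one and most of it is sound: the structural facts about $CGC_0$ (idempotency, $A$-self-adjointness, $P_1^0A\,CGC_0=0$), the key lemma $\|CGC_0x\|_A^2\le\gamma\|CGC_0x\|_{AD^{-1}A}^2$, the deduction $\gamma\ge\alpha_{\mathrm{post}}$, the iteration of the smoothing inequalities over $\nu_{\mathrm{pre}},\nu_{\mathrm{post}}$, and the entire post-smoothing factor $\|V_{0,\mathrm{post}}^{\nu_{\mathrm{post}}}CGC_0\|_A\le\sqrt{1-\alpha_{\mathrm{post}}/\gamma}$ are all correct.

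The pre-smoothing factor, however, is asserted rather than proved, and the gap is exactly where you flag it. Writing $w=V_{0,\mathrm{pre}}^{\nu_{\mathrm{pre}}}z$, the hypothesis gives $\|w\|_A^2+\alpha_{\mathrm{pre}}\|w\|_{AD^{-1}A}^2\le\|z\|_A^2$, and the bound $\|CGC_0w\|_A^2\le(1+\alpha_{\mathrm{pre}}/\gamma)^{-1}\|z\|_A^2$ would follow immediately if one had $\|CGC_0w\|_A^2\le\gamma\,\|w\|_{AD^{-1}A}^2$ (note: $w$, not $CGC_0w$, on the right). But your lemma only yields $\|CGC_0w\|_A^2\le\gamma\|CGC_0w\|_{AD^{-1}A}^2$, and the Pythagoras identity $\|w\|_A^2=\|CGC_0w\|_A^2+\|(I_0-CGC_0)w\|_A^2$ does nothing to convert $\|CGC_0w\|_{AD^{-1}A}$ into $\|w\|_{AD^{-1}A}$; as you yourself observe, $CGC_0$ is not a contraction in the $AD^{-1}A$-norm, so the phrase ``one arrives at'' hides precisely the step that does not follow. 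Worse, the missing inequality is genuinely stronger than \eqref{hp:2}: by Cauchy--Schwarz, $(u^HA^{-1}u)^2\le(u^HA^{-1}DA^{-1}u)(u^HD^{-1}u)$ for every $u\in\ker P_1^0$, which shows that the optimal constant in $\|CGC_0x\|_A^2\le\gamma\|x\|_{AD^{-1}A}^2$ dominates (and can strictly exceed, e.g.\ for $A=\mathrm{diag}(1,\lambda)$ with $\lambda\to0$ and a suitably tilted $\ker P_1^0$) the optimal $\gamma$ in \eqref{hp:2}. So the pre-factor estimate cannot be obtained by substituting that form with the same $\gamma$; it needs the genuinely more delicate argument of \cite{RS-1987}, which your closing paragraph describes in outline but does not supply. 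Everything else stands; this one step is the hole.
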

\noindent It is worth stressing that in Theorem \ref{teo:conv-TGM}
the matrix $D\in \mathbb{C}^{m_0\times m_0}$ can be substituted by
any Hermitian positive definite matrix $X$: clearly the choice
$X=I$ can give rise to valuable simplifications
\cite{ADS-SIMAX-2004}.
\par
At first sight, the MGM convergence requirements are more severe
since the smoothing and CGC iteration matrices are linked in the
same inequalities as stated below.
\begin{thm}[MGM convergence \cite{RS-1987}]\label{teo:conv-MGM}
 Let $m_0=m >m_1> m_2 > \ldots >m_s>\ldots> m_{s_{\min}}$  and let $A\in\mathbb{C}^{m\times
  m}$ be a positive definite matrix. 
  Let $P_{s+1}^{s}\in \mathbb{C}^{m_{s+1}\times
  m_s}$ be full-rank matrices for any level $s$.
  Suppose  that there exist $\delta_\mathrm{pre}>0$ and $\delta_\mathrm{post}>0$
  such that
  \begin{subequations}\label{AMGipo1}
  \begin{align}\label{AMGipo1.1}
      {\|V_{s,\mathrm{pre}}^{\nu_\mathrm{pre}} x\|}_{A_s}^2
      & \leq 
      {\|x\|}_{A_s}^2 -
      \delta_\mathrm{pre}\, {\|CGC_s V_{s,\mathrm{pre}}^{\nu_\mathrm{pre}} x\|}_{A_s}^2
      &
      \textrm{ for any } x\in \mathbb{C}^{m_s}
    \\
    \label{AMGipo1.2}
      {\|V_{s,\mathrm{posr}}^{\nu_\mathrm{post}} x\|}_{A_s}^2
      &\leq
      {\|x\|}_{A_s}^2 -
      \delta_\mathrm{post}\, {\|CGC_s x\|}_{A_s}^2
      &
      \textrm{ for any } x\in \mathbb{C}^{m_s}
  \end{align}
  \end{subequations}
  both for each $s=0,\dots,s_{\min}-1$, then
  $\delta_\mathrm{post}\leq 1$ and
  \begin{equation}\label{AMGtes}
    {\|MGM_0\|}_{A}
    \leqslant
    \sqrt{\frac{1-\delta_\mathrm{post}}{1+\delta_\mathrm{pre}}}
    <1.
  \end{equation}
\end{thm}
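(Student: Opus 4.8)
The plan is to prove the MGM convergence estimate \eqref{AMGtes} by induction on the number of levels, working from the coarsest grid $s_{\min}$ upward to $s=0$. The induction hypothesis will be a uniform bound $\|MGM_{s+1}\|_{A_{s+1}}\le\rho$ for a constant $\rho<1$ to be identified with the right-hand side of \eqref{AMGtes}; the base case $MGM_{s_{\min}}=O$ is trivial. The key algebraic observation is that the MGM iteration matrix at level $s$ can be written as a perturbation of the corresponding TGM iteration matrix: from the recursive definition,
\[
MGM_s = V_{s,\mathrm{post}}^{\nu_\mathrm{post}}\Bigl[CGC_s + (P_{s+1}^s)^H\, MGM_{s+1}\, A_{s+1}^{-1} P_{s+1}^s A_s\Bigr] V_{s,\mathrm{pre}}^{\nu_\mathrm{pre}},
\]
so that the only difference from the exact coarse-grid correction is that $A_{s+1}^{-1}r_{s+1}$ is replaced by an approximate solve with error propagation governed by $MGM_{s+1}$.

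First I would reduce everything to estimates in the $A_s$-norm. Writing $\widehat{MGM}_{s+1} = A_{s+1}^{1/2} MGM_{s+1} A_{s+1}^{-1/2}$ (so $\|\widehat{MGM}_{s+1}\|_2 = \|MGM_{s+1}\|_{A_{s+1}}\le\rho$ by the inductive hypothesis), one expresses $MGM_s$ acting in the $A_s$-inner product and uses the fact that $CGC_s$ is an $A_s$-orthogonal projection (this follows from \eqref{eq:CGC_s}: $CGC_s = I_s - (P_{s+1}^s)^H A_{s+1}^{-1} P_{s+1}^s A_s$ satisfies $CGC_s^2 = CGC_s$ and is self-adjoint with respect to $\langle\cdot,\cdot\rangle_{A_s}$). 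Hence $I_s - CGC_s$ is also an $A_s$-orthogonal projection, and the perturbation term factors through its range. The decisive pointwise estimate is: for every $x$,
\[
\|MGM_s\, x\|_{A_s}^2 \le \|V_{s,\mathrm{post}}^{\nu_\mathrm{post}}\bigl(CGC_s + \text{perturbation}\bigr)V_{s,\mathrm{pre}}^{\nu_\mathrm{pre}} x\|_{A_s}^2,
\]
where one shows the bracketed operator applied to any vector $z$ satisfies $\|(CGC_s + \text{pert})z\|_{A_s} \le \|CGC_s z\|_{A_s} + \rho\,\|(I_s-CGC_s)z\|_{A_s}$; since $CGC_s$ and $I_s-CGC_s$ are complementary orthogonal projections, $\|CGC_s z\|_{A_s}^2 + \|(I_s-CGC_s)z\|_{A_s}^2 = \|z\|_{A_s}^2$, and a convexity/Cauchy--Schwarz argument bounds the whole thing by $\|z\|_{A_s}^2$ when $\rho\le 1$, with a strict gain coming from the smoothers.

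The remaining work is to feed this into the smoothing hypotheses \eqref{AMGipo1.1}--\eqref{AMGipo1.2}. Applying \eqref{AMGipo1.2} with $x$ replaced by the post-smoothing input handles the outer $V_{s,\mathrm{post}}^{\nu_\mathrm{post}}$ factor and produces the $-\delta_\mathrm{post}\|CGC_s(\cdot)\|_{A_s}^2$ term; applying \eqref{AMGipo1.1} handles the inner $V_{s,\mathrm{pre}}^{\nu_\mathrm{pre}}$ factor. Combining, exactly as in the proof of Theorem~\ref{teo:conv-TGM}, one obtains
\[
\|MGM_s\, x\|_{A_s}^2 \le \frac{1-\delta_\mathrm{post}}{1+\delta_\mathrm{pre}}\,\|x\|_{A_s}^2,
\]
provided the inductive bound $\rho = \sqrt{(1-\delta_\mathrm{post})/(1+\delta_\mathrm{pre})}$ is consistent — which it is, since this is precisely the value of $\rho$ we carry through. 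The inequality $\delta_\mathrm{post}\le 1$ falls out by testing \eqref{AMGipo1.2} on a vector fixed by the post-smoother (or on the kernel direction of $CGC_s$, where the left side is nonnegative). I expect the main obstacle to be the careful bookkeeping in the perturbation estimate: one must verify that the cross term between $CGC_s z$ and the $MGM_{s+1}$-image does not spoil the contraction, which works only because the image of the perturbation lies in the range of $I_s-CGC_s$, orthogonal (in the $A_s$-inner product) to the range of $CGC_s$. Getting this orthogonality and the resulting sharp constant — rather than a lossy one — is the delicate point, and it is exactly what makes the MGM bound match the TGM bound instead of degrading with the number of levels.
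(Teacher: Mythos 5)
The paper offers no proof of this theorem: it is quoted from Ruge and St\"uben \cite{RS-1987}, so there is nothing internal to compare against; your blind reconstruction is, in substance, the classical argument and it works. The skeleton is right: the recursion gives $MGM_s=V_{s,\mathrm{post}}^{\nu_\mathrm{post}}\bigl[CGC_s+(P_{s+1}^s)^H MGM_{s+1}A_{s+1}^{-1}P_{s+1}^sA_s\bigr]V_{s,\mathrm{pre}}^{\nu_\mathrm{pre}}$; the perturbation maps into $\mathrm{Range}\bigl((P_{s+1}^s)^H\bigr)=\mathrm{Range}(I_s-CGC_s)$; the identity $\|(P_{s+1}^s)^Hv\|_{A_s}=\|v\|_{A_{s+1}}$ converts the inductive bound $\|MGM_{s+1}\|_{A_{s+1}}\le\rho$ into $\|\mathrm{pert}\,z\|_{A_s}\le\rho\,\|(I_s-CGC_s)z\|_{A_s}$; and the $A_s$-Pythagorean identity then yields $\|(CGC_s+\mathrm{pert})z\|_{A_s}^2\le(1-\rho^2)\|CGC_sz\|_{A_s}^2+\rho^2\|z\|_{A_s}^2$, which combined with \eqref{AMGipo1.1}--\eqref{AMGipo1.2} (note that $CGC_s(CGC_s+\mathrm{pert})z=CGC_sz$, since $CGC_s$ is idempotent and annihilates the perturbation's range) closes the induction exactly at $\rho^2=(1-\delta_\mathrm{post})/(1+\delta_\mathrm{pre})$. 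Be aware that the intermediate triangle inequality you display is lossy if squared directly; only the orthogonal splitting gives the sharp, level-independent constant, as you yourself flag. The one concrete error is your justification of $\delta_\mathrm{post}\le1$: testing \eqref{AMGipo1.2} on a vector fixed by the post-smoother only yields $CGC_sx=0$, and on the kernel of $CGC_s$ the inequality is vacuous. The correct choice is a nonzero $x$ in the \emph{range} of $CGC_s$ (nontrivial since $\mathrm{rank}\,P_{s+1}^s=m_{s+1}<m_s$), so that $CGC_sx=x$ and $0\le\|V_{s,\mathrm{post}}^{\nu_\mathrm{post}}x\|_{A_s}^2\le(1-\delta_\mathrm{post})\|x\|_{A_s}^2$ forces $\delta_\mathrm{post}\le1$. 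This is a one-line repair and does not affect the rest of the argument.
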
%
\noindent By virtue of Theorem \ref{teo:conv-MGM}, the sequence
$\{x_m^{(k)}\}_{k\in \mathbb{N}}$ will converge to the solution of
the linear system $A_mx_m=b_m$ and within a constant error
reduction not depending on $m$ and ${s_{\min}}$ if at least one
between $\delta_\mathrm{pre}$ and $\delta_\mathrm{post}$ is
independent of $m$ and ${s_{\min}}$.
\par
Nevertheless, as also suggested in \cite{RS-1987}, the
inequalities \eqref{AMGipo1.1} and \eqref{AMGipo1.2} can be
respectively  splitted as
\begin{equation}
\begin{cases}
  {\|V_{s,\mathrm{pre}}^{\nu_\mathrm{pre}} x\|}_{A_s}^2
  &\leq  {\|x\|}_{A_s}^2 -
  \alpha\,{\|V_{s,\mathrm{pre}}^{\nu_\mathrm{pre}} x\|}_{A_sD_s^{-1}A_s}
  \\
  {\|CGC_s x\|}_{A_s}^2
  & \leq  \gamma\,{\|x\|}_{A_s D_s^{-1}A_s}^2
  \\
  \delta_\mathrm{pre}=\alpha/\gamma
\end{cases}
\label{AMGipo1.a-bis}
\end{equation}
and
\begin{equation}
\begin{cases}
  {\|V_{s,\mathrm{post}}^{\nu_\mathrm{post}} x\|}_{A_s}^2
  &\leq {\|x\|}_{A_s}^2 -
  \beta\,{\|x\|}_{A_s D_s^{-1}A_s}^2
  \\
  {\|CGC_s\,x\|}_{A_s}^2
  &\leq \gamma\,{\|x\|}_{A_s D_s^{-1}A_s}^2
  \\
  \delta_\mathrm{post}=\beta/\gamma
\end{cases}
\label{AMGipo1.b-bis}
\end{equation}
where $D_s$ is the diagonal part of $A_s$ (again, the
$AD^{-1}A$-norm is not compulsory \cite{ADS-SIMAX-2004} and the
$A^2$-norm will be considered in the following) and where, more
importantly, the coefficients $\alpha$, $\beta$ and $\gamma$ can
differ in each recursion level $s$ since the step from
(\ref{AMGipo1.a-bis}) to (\ref{AMGipo1.1}) and from
(\ref{AMGipo1.b-bis}) to (\ref{AMGipo1.2}) are purely algebraic
and do not affect the proof of Theorem \ref{teo:conv-MGM}. \\
Therefore,  in order to prove the V-cycle optimal convergence, it
is possible to consider the inequalities
\begin{subequations}\label{AMGipo2}
  \begin{align}
    \label{AMGipo2.1}
      {\|V_{s,\mathrm{pre}}^{\nu_\mathrm{pre}} x\|}_{A_s}^2
      & \leq {\|x\|}_{A_s}^2 - \alpha_s\,{\|V_{s,\mathrm{pre}}^{\nu_\mathrm{pre}} x\|}_{A_s^2}^2
      & \textrm{ for any } x\in \mathbb{C}^{m_s}
      \\
    \label{AMGipo2.2}
      {\|V_{s,\mathrm{post}}^{\nu_\mathrm{post}} x\|}_{A_s}^2
      & \leq {\|x\|}_{A_s}^2 - \beta_s\,{\|x\|}_{A_s^2}^2
      & \textrm{ for any } x\in \mathbb{C}^{m_s}
      \\
    \label{AMGipo2.3}
      {\|CGC_s x\|}_{A_s}^2
      & \leq \gamma_s\,{\|x\|}_{A_s^2}^2
      & \textrm{ for any } x\in \mathbb{C}^{m_s}.
  \end{align}
\end{subequations}
where it is required that $\alpha_s$, $\beta_s$, $\gamma_s \ge 0$
for each $s=0,\dots,s_{\min}-1$ and
\begin{equation}\label{positive_lower_bound}
  \delta_\mathrm{pre }=\min_{0\leq s < s_{\min}}\frac{\alpha_s}{\gamma_s},
  \qquad\quad
  \delta_\mathrm{post}=\min_{0\leq s < s_{\min}}\frac{\beta
  _s}{\gamma_s}.
\end{equation}
We refer to (\ref{AMGipo2.1}) as the \emph{pre-smoothing
property}, (\ref{AMGipo2.2}) as the \emph{post-smoothing property}
and (\ref{AMGipo2.3}) as the \emph{approximation
property} (see \cite{RS-1987}). \\
An evident benefit in considering the inequalities
(\ref{AMGipo2.1})-(\ref{AMGipo2.3}) relies on to the fact that
the analysis of the smoothing iterations is distinguished from
the more difficult analysis of the projector operator.\\
Moreover, the MGM smoothing properties (\ref{AMGipo2.1}) and
(\ref{AMGipo2.2}) are nothing more than the TGM smoothing
properties (\ref{hp:1.pre}) and (\ref{hp:1.post})  with $D$
substituted by $I$, in accordance with the previous reasoning (see
\cite{ADS-SIMAX-2004}).
%
\section{Two-grid and Multi-grid methods for DCT III matrices}\label{sec:multi-coseni}
Let $\mathcal{C}_m=\{C_m \in \mathbb{R}^{m\times m}| C_m= Q_m D_m
Q_m^T \}$ the unilevel DCT-III cosine matrix algebra, i.e. the
algebra of matrices that are simultaneously diagonalized by the
orthogonal transform
\begin{equation}\label{Qn}
Q_m=\left[ \sqrt{\frac{2-\delta_{j,1}}{m}}
\cos\left\{\frac{(i-1)(j-1/2)\pi}{m}\right\} \right]_{i,j=1}^m
\quad
\end{equation}
with $\delta_{i,j}$ denoting the Kronecker symbol.\\
Let $f$ be a real-valued even trigonometric polynomial of degree
$k$ and period $2\pi$. 
Then, the DCT III matrix of order $m$
generated by $f$ is defined as
\[
C_m(f)= Q_m D_m(f) Q_m^T, \quad D_m(f) =\mathrm{diag}_{1\le j\le
m} \, f\left(x_j^{[m]}\right), \quad x_j^{[m]}=\frac{(j-1)\pi}{m}.
\]
Clearly, $C_m(f)$ is a symmetric band matrix of bandwidth $2k+1$.
In the following, we denote in short with $C_{s}=C_{m_s}(g_s)$ the
DCT III matrix of size $m_s$ generated by the function $g_s$.\\
An algebraic TGM/MGM method for (multilevel) DCT III matrices
generated by  a real-valued even trigonometric polynomial has been
proposed in \cite{CST-NLA-2005}. Here, we briefly report the
relevant results with respect to TGM convergence analysis, the aim
being to prove in \S \ref{sec:vcycle} the
V-cycle optimal convergence under suitable conditions.\\
Indeed, the projector operator $P_{s+1}^s$ is chosen as
\[
P_{s+1}^s=  T_{s+1}^s C_s(p_s)
\]
where $T_{s+1}^s \in \mathbb{R}^{m_{s+1} \times m_s},$
$m_{s+1}=m_{s}/2$, is the cutting operator defined as
\begin{equation} \label{eq:cutting}
  \left[T_{s+1}^s\right]_{i,j}=\left\{ \begin{array}{ll}
            1/\sqrt{2} & \ \ \mbox{\rm for }\  j\in \{2i-1,2i\},\ i=1,\ldots,m_{s+1},  \\
            0 & \ \ \mbox{\rm otherwise}.
          \end{array} \right.
\end{equation}
and $C_s(p_s)$ is the DCT-III cosine matrix of size $m_s$
generated
by a suitable even trigonometric polynomial $p_s$. \\
Here, the scaling by a factor $1/\sqrt{2}$ is introduced in order
to normalize the matrix $T_{s+1}^s$ with respect to the Euclidean
norm. From the point of view of an algebraic multigrid this is a
natural choice, while in a geometric multigrid it is more natural
to consider just a scaling  by $1/2$ in the projector, to obtain
an average value.\\
The cutting operator plays a leading role in preserving both the
structural and spectral properties of the projected matrix
$C_{s+1}$: in fact, it ensures a spectral link between the space
of the frequencies of size $m_s$ and the corresponding space of
frequencies of size $m_{s+1}$, according to the following Lemma.
\begin{lem}\cite{CST-NLA-2005}\label{lemma:tnk}
Let $Q_s\in \mathbb{R}^{m_s \times m_s}$ and $T_{s+1}^s \in
\mathbb{R}^{m_{s+1} \times m_s}$ be given as in \emph{(\ref{Qn})}
and \emph{(\ref{eq:cutting})} respectively. Then
\begin{equation}\label{decomp:tnk}
T_{s+1}^s Q_s = Q_{s+1} [\Phi_{s+1},\Theta_{s+1}\Pi_{s+1}],
\end{equation}
where
\begin{subequations}
\begin{align}
\Phi_{s+1} &= \mathrm{diag}_{j=1,\ldots,m_{s+1}} \left[ 
\cos \left( \frac{1}{2}\left ( \frac{x_j^{[m_s]}}{2}\right) \right)\right], \quad x_j^{[m_s]}=\frac{(j-1)\pi}{m_s},\\
\Theta_{s+1} &= \mathrm{diag}_{j=1,\ldots,m_{s+1}} \left[-
\cos \left( \frac{1}{2}\left ( \frac{x_j^{[m_s]}}{2}
+\frac{\pi}{2}\right) \right)\right],
\end{align}
\end{subequations}
and $\Pi_{s+1}\in \mathbb{R}^{m_{s+1} \times m_{s+1}}$ is the
permutation matrix
\[(1,2,\ldots,m_{s+1})
\mapsto (1,m_{s+1},m_{s+1}-2,\ldots,2). 
\]
As a consequence, let $A_s=C_s(f_s)$ be the DCT-III matrix
generated by $f_s$, then
\[
A_{s+1}=P_{s+1}^s A_s (P_{s+1}^s)^T = C_{s+1}(f_{s+1})
\]
where
\begin{eqnarray}
f_{s+1}(x) &=& 
\cos^2\left( \frac{x/2}{2} \right )
f_s\left(\frac{x}{2}\right) p_s^2\left(\frac{x}{2}\right)  \label{eq:hatf} \\
&& \quad   +\cos^2\left( \frac{\pi-x/2}{2} \right )
f_s\left(\pi-\frac{x}{2}\right) p_s^2\left(\pi-\frac{x}{2}\right)
, \quad x\in [0,\pi]. \nonumber
\end{eqnarray}
\end{lem}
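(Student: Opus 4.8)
The plan is to prove the factorisation \eqref{decomp:tnk} by a direct entrywise computation and then to obtain the statement on $A_{s+1}$ by elementary linear algebra; the only trigonometric input for the first part is the sum-to-product identity $\cos\alpha+\cos\beta=2\cos\frac{\alpha+\beta}{2}\cos\frac{\alpha-\beta}{2}$. First I would unfold the cutting operator: by \eqref{eq:cutting}, for $i=1,\dots,m_{s+1}$ and $j=1,\dots,m_s$ one has $(T_{s+1}^s Q_s)_{i,j}=\frac{1}{\sqrt2}\,[(Q_s)_{2i-1,j}+(Q_s)_{2i,j}]$. Inserting the entries of $Q_s$ from \eqref{Qn} and applying the sum-to-product identity to the two cosines, whose arguments differ only through the row index, each pair collapses into a product of two cosines: one whose argument is precisely the one occurring in the $(i,j)$ entry of $Q_{s+1}$ (halving the dimension turns $\sqrt{(2-\delta_{j,1})/m_s}$ into $\sqrt{(2-\delta_{j,1})/m_{s+1}}$ and the leftover factor $\sqrt2$ absorbs the $1/\sqrt2$ of $T_{s+1}^s$), and a half-angle cosine depending on $j$ only. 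For the indices $j$ in the ``low'' frequency range this already exhibits the $j$-th column of $T_{s+1}^s Q_s$ as that half-angle cosine times the $j$-th column of $Q_{s+1}$, i.e.\ as the $Q_{s+1}\Phi_{s+1}$ block, which identifies the diagonal of $\Phi_{s+1}$.

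For $j$ in the ``high'' range the cosine that ought to reproduce an entry of $Q_{s+1}$ carries a frequency outside $\{1,\dots,m_{s+1}\}$; here I would use the reflection $\cos((2i-1)\pi-\xi)=-\cos\xi$, legitimate because $2i-1$ is odd, to fold that frequency back into the admissible range. The induced reindexing of $j$ by its mirror image is exactly the permutation $\Pi_{s+1}$; the sign produced by the reflection, together with the complementary half-angle identity $\cos(\frac{\pi}{2}-\eta)=\sin\eta$, converts the half-angle factor into the entries of $\Theta_{s+1}$ as displayed; and one checks directly the two boundary indices (the fixed point of $\Pi_{s+1}$ and the mirror index). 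Assembling the low and high ranges yields $T_{s+1}^s Q_s=Q_{s+1}[\Phi_{s+1},\Theta_{s+1}\Pi_{s+1}]$.

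The consequence is then formal. Writing $C_s(h)=Q_s D_s(h)Q_s^T$, we have $A_s=C_s(f_s)$ and $P_{s+1}^s=T_{s+1}^s Q_s D_s(p_s) Q_s^T$, whence, using $Q_s^T Q_s=I_s$ and that diagonal matrices multiply entrywise,
\[
A_{s+1}=P_{s+1}^s A_s (P_{s+1}^s)^T
=T_{s+1}^s Q_s\, D_s(p_s)D_s(f_s)D_s(p_s)\, Q_s^T (T_{s+1}^s)^T
=T_{s+1}^s Q_s\, D_s(p_s^2 f_s)\, Q_s^T (T_{s+1}^s)^T .
\]
Substituting the factorisation just proved and splitting $D_s(p_s^2 f_s)$ into its two $m_{s+1}\times m_{s+1}$ diagonal blocks $D^{(1)}$ (low frequencies) and $D^{(2)}$ (high frequencies) gives
\[
A_{s+1}=Q_{s+1}\big(\Phi_{s+1} D^{(1)}\Phi_{s+1}+\Theta_{s+1}\Pi_{s+1} D^{(2)}\Pi_{s+1}^T\Theta_{s+1}\big)Q_{s+1}^T .
\]
Each of the two summands in the bracket is diagonal, since conjugating a diagonal matrix by a permutation only reorders its entries. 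Reading off the $j$-th diagonal entry and translating the sampling points through $x_j^{[m_{s+1}]}=2\,x_j^{[m_s]}$ together with the mirror map underlying $\Pi_{s+1}$, one recognises exactly $f_{s+1}(x_j^{[m_{s+1}]})$ with $f_{s+1}$ as in \eqref{eq:hatf}, the two summands reproducing, respectively, the term at argument $x/2$ and the term at argument $\pi-x/2$. Therefore $A_{s+1}=Q_{s+1}D_{s+1}(f_{s+1})Q_{s+1}^T=C_{s+1}(f_{s+1})$.

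The one genuinely delicate step is the high-frequency fold: pinning down the mirror permutation $\Pi_{s+1}$ precisely, carrying the correct signs, and handling the two boundary indices so that the half-angle factors coalesce into the stated $\Theta_{s+1}$. Everything else — the sum-to-product manipulation, the normalisation bookkeeping, and the final diagonalisation — is routine.
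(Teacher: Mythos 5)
Your plan is correct and is exactly the standard argument: the paper itself states Lemma \ref{lemma:tnk} without proof (it is imported from \cite{CST-NLA-2005}), and the proof given there — as well as the analogous $\tau$-algebra computation in \cite{FS-CALCOLO-1991} — is precisely your entrywise sum-to-product computation for the columns $j\le m_{s+1}$, the odd-index reflection $\cos((2i-1)\pi-\xi)=-\cos\xi$ together with $\cos(\tfrac{\pi-x}{2})=\sin(\tfrac{x}{2})$ for the mirrored columns, and the purely algebraic conjugation step for $A_{s+1}$. One practical warning when you ``match the entries as displayed'': carrying out your computation gives the $j$-th diagonal entry of $\Phi_{s+1}$ as $\cos\bigl(x_j^{[m_s]}/2\bigr)$ (which is what is consistent with \eqref{eq:hatf} after the substitution $x_j^{[m_{s+1}]}=2x_j^{[m_s]}$), whereas the formula printed in the statement reads as $\cos\bigl(x_j^{[m_s]}/4\bigr)$; similarly the printed $\Theta_{s+1}$ and the description of $\Pi_{s+1}$ contain misprints (the correct $\Theta_{s+1}$ has entries $-\sin\bigl(x_j^{[m_s]}/2\bigr)$, whose vanishing first entry is what disposes of the otherwise problematic zero column $j=m_{s+1}+1$ that you flag as a boundary case) — so a mismatch there signals a typo in the displayed lemma, not an error in your derivation.
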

\par
On the other side, the convergence of proposed TGM at size $m_s$
is ensured by choosing the polynomial as follows.
\begin{defn} \label{def:p}
Let $x^0 \in [0,\pi)$ a zero of the generating function $f_s$. The
polynomial $p_s$ is chosen so that
\begin{subequations}
\begin{align}
\lim_{x \rightarrow x^0} \frac{p_s^2(\pi-x)}{f_s(x)} & < +\infty,\label{eq:condiz}\\
 p_s^2(x)+p_s^2(\pi-x) & >  0. \label{eq:condiz-bis}
\end{align}
\end{subequations}
In the special case $x^0=\pi$, the requirement (\ref{eq:condiz})
is replaced by
\begin{subequations}
\begin{align}
\lim_{x \rightarrow x^0=\pi} \,
\frac{p_s^2(\pi-x)}{\cos^2\left(\frac{x}{2}\right) \, f_s(x)} & <
+\infty.\label{eq:condiz-pi}
\end{align}
\end{subequations}
If $f_s$ has more than one zero in $[0,\pi]$, then $p_s$ will be
the product of the polynomials  satisfying the condition
(\ref{eq:condiz}) (or (\ref{eq:condiz-pi})) for every single zero
and globally the condition (\ref{eq:condiz-bis}).
\end{defn}
It is evident from the quoted definition that the polynomial $p_s$
must have zeros of proper order in any mirror point $\hat x^0 =
\pi- x^0$, where $x^0$ is a zeros of $f_s$.\\
It is worth stressing that conditions (\ref{eq:condiz}) and
(\ref{eq:condiz-bis}) are in perfect agreement with the case of
other structures such as $\tau$, symmetric Toeplitz and circulant
matrices (see e.g. \cite{S-NM-2002,ST-SISC-2004}), while the
condition (\ref{eq:condiz-pi}) is proper of the DCT III algebra
and it corresponds to a worsening
of the convergence requirements. \\
Moreover, as just suggested in \cite{CST-NLA-2005}, in the case
$x^0=0$ the condition (\ref{eq:condiz}) can also be weakened as
\begin{subequations}
\begin{align}
\lim_{x \rightarrow x^0=0}\,
\frac{\cos^2\left(\frac{\pi-x}{2}\right) \, p_s^2(\pi-x)}{f_s(x)}
& < +\infty. \label{eq:condiz-0}
\end{align}
\end{subequations}
We note that if $f_s$ is a trigonometric polynomial of degree $k$,
then $f_s$ can have a zero of order at most $2k$. If none of the
root of $f_s$ are at $\pi$, then by (\ref{eq:condiz}) the degree
of $p_s$ has to be less than or equal to $\stronc{k/2}$. If $\pi$
is one of the roots of $f_s$, then the degree of $p_s$ is less
than or equal to $\stronc{(k+1)/2}$.\\
Notice also that from (\ref{eq:hatf}), it is easy to obtain the
Fourier coefficients of $f_{s+1}$ and hence the nonzero entries of
$A_{s+1}=C_{s+1}(f_{s+1})$. In addition, we can obtain the roots
of $f_{s+1}$ and their orders by knowing the roots of $f_{s}$ and
their orders.
\begin{lem}\cite{CST-NLA-2005}\label{lemma:descrizione-zero}
If $0\le x^0\le \pi/2$ is a zero of $f_s$, then by
\emph{(\ref{eq:condiz})}, $p_s(\pi-x^0)=0$ and hence by
\emph{(\ref{eq:hatf})}, $f_{s+1}(2 x^0)=0$, i.e. $y^0=2x^0$ is a
zero of $f_{s+1}$. Furthermore, because $p_s(\pi-x^0)=0$, by
\emph{(\ref{eq:condiz-bis})}, $p_s(x^0)>0$ and hence the orders of
$x^0$ and $y^0$ are the same. Similarly, $\pi/2\le x^0< \pi$, then
$y^0=2(\pi-x^0)$ is a root of $f_{s+1}$ with the same order as
$x^0$. Finally, if $x^0=\pi$, then $y^0=0$ with order equals to
the order of $x^0$ plus two.
\end{lem}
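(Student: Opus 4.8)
The plan is to read both the vanishing of $f_{s+1}$ and the orders of its zeros directly off the explicit symbol formula \eqref{eq:hatf}, using in an essential way that the two summands on its right‑hand side are \emph{nonnegative}. I would open with two preliminary observations. First, since $f=f_0\ge 0$ (it generates positive definite matrices) and every $p_s^2\ge 0$ and $\cos^2(\cdot)\ge 0$, formula \eqref{eq:hatf} gives inductively $f_s\ge 0$ on $[0,\pi]$ for all $s$; hence each zero of the real‑analytic $f_s$ has \emph{even} order, and — the point I will keep using — the two summands in \eqref{eq:hatf} cannot cancel, so the order of any zero $y^0$ of $f_{s+1}$ is exactly the minimum of the orders at $y^0$ of the two summands. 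Second, if $x^0\in[0,\pi)$ is a zero of $f_s$ of order $2\mu$, then finiteness of $p_s^2(\pi-x)/f_s(x)$ as $x\to x^0$ in \eqref{eq:condiz} forces $p_s$ to vanish at the mirror point $\pi-x^0$ to order at least $\mu$ (in particular $p_s(\pi-x^0)=0$), after which \eqref{eq:condiz-bis} at $x=x^0$ yields $p_s^2(x^0)>0$; symmetrically, if $x^0=\pi$ the stronger requirement \eqref{eq:condiz-pi} forces $p_s$ to vanish at $0$ to order at least $\mu+1$, while \eqref{eq:condiz-bis} at $x=0$ gives $p_s(\pi)\ne 0$.

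Next I would treat $0\le x^0<\pi/2$. Set $y^0=2x^0$ and substitute $x=y^0+t$ in \eqref{eq:hatf}. The first summand becomes $\cos^2(x^0/2+t/4)\,f_s(x^0+t/2)\,p_s^2(x^0+t/2)$; since $x^0/2\in[0,\pi/4)$ we have $\cos^2(x^0/2)>0$ and $p_s^2(x^0)>0$ by the preliminary step, so this summand vanishes at $t=0$ to the exact order $2\mu$. The second summand carries the factor $p_s^2(\pi-x^0-t/2)$, which vanishes to order at least $2\mu$ (as $p_s$ vanishes at $\pi-x^0$ to order at least $\mu$) while its other factors stay bounded; so it vanishes to order at least $2\mu$. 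By the no‑cancellation principle $f_{s+1}$ vanishes at $y^0$ to order exactly $2\mu$, the order of $x^0$; in particular $f_{s+1}(2x^0)=0$. The sub‑case $x^0=0$ is identical with one wrinkle: the prefactor of the second summand, $\cos^2((\pi-x/2)/2)=\sin^2(x/4)$, itself vanishes to order $2$ at $y^0=0$, so that summand still has order at least $2\mu$ there (this is also where \eqref{eq:condiz-0} fits), and the conclusion is unchanged.

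The case $\pi/2<x^0<\pi$ is the mirror image: with $y^0=2(\pi-x^0)$ and $x=y^0+t$ the two summands exchange roles, one has $x^0/2\in(\pi/4,\pi/2)$ so $\cos^2(x^0/2)>0$, and the same argument gives that $y^0$ is a zero of $f_{s+1}$ of order $2\mu$. For $x^0=\pi$ I would set $y^0=0$ and substitute $x=t$ in \eqref{eq:hatf}: the second summand is $\sin^2(t/4)\,f_s(\pi-t/2)\,p_s^2(\pi-t/2)$, where $\sin^2(t/4)\sim t^2/16$ contributes order $2$, $f_s(\pi-t/2)$ contributes order $2\mu$, and $p_s^2(\pi-t/2)\to p_s^2(\pi)>0$; so it has exact order $2\mu+2$. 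The first summand $\cos^2(t/4)\,f_s(t/2)\,p_s^2(t/2)$ has $\cos^2(t/4)\to 1$, $f_s(t/2)\to f_s(0)>0$, and $p_s^2(t/2)$ vanishing to order at least $2\mu+2$ by the $x^0=\pi$ preliminary step; so it has order at least $2\mu+2$. Hence $f_{s+1}$ has at $0$ a zero of order exactly $2\mu+2$, the order of $x^0=\pi$ increased by two.

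I expect the only genuine work to be in the two preliminary steps: converting the limit conditions of Definition~\ref{def:p} into exact lower bounds on the vanishing order of $p_s$ at the mirror points (distinguishing the generic case from $x^0=\pi$, where the extra order‑$2$ zero of $\sin^2(x/4)$ at $x=0$ is precisely what produces the ``$+2$''), and keeping track of which of the prefactors $\cos^2(x/4)$, $\sin^2(x/4)$ is nonzero at the point in question. Everything afterwards is bookkeeping, the nonnegativity of the summands of \eqref{eq:hatf} ruling out cancellation.
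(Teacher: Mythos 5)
Your proposal is correct and follows essentially the same route as the paper, which quotes this lemma from \cite{CST-NLA-2005} and only sketches the argument inside the statement itself: use (\ref{eq:condiz}) (resp. (\ref{eq:condiz-pi})) to force $p_s$ to vanish at the mirror point to at least half the order of the zero of $f_s$, use (\ref{eq:condiz-bis}) to get $p_s^2(x^0)>0$, and read the zero of $f_{s+1}$ and its order off (\ref{eq:hatf}), the nonnegativity of the two summands excluding cancellation. Your write-up merely makes the order bookkeeping (including the extra factor $\sin^2(x/4)$ producing the ``$+2$'' when $x^0=\pi$) explicit, which is exactly what the cited proof does.
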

In \cite{CST-NLA-2005} the Richardson method has be considered as
the most natural choice for the smoothing iteration, since the
corresponding iteration matrix $V_m:=I_m-\omega A_m \in
\mathbb{C}^{m\times m}$ belongs to the DCT-III algebra, too.
Further remarks about such a type of smoothing iterations and the
tuning of the parameter $\omega$ are reported in
\cite{ST-SISC-2004,AD-NM-2007}.
\begin{thm} \cite{CST-NLA-2005} \label{ver:hpTGM}
Let $A_{m_0}=C_{m_0}(f_0)$ with $f_0$ being a nonnegative
trigonometric polynomial and let $V_{m_0}=I_{m_0}-\omega A_{m_0}$
with $\omega=2/\|f_0\|_\infty$ and $\omega=1/\|f_0\|_\infty$,
respectively for the pre-smoothing and the post-smoothing
iteration. Then, under the quoted assumptions and definitions the
inequalities \emph{(\ref{hp:1.pre})}, \emph{(\ref{hp:1.post})},
and \emph{(\ref{hp:2})} hold true and the proposed TGM converges
linearly.
\end{thm}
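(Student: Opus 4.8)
The plan is to verify the three hypotheses of Theorem~\ref{teo:conv-TGM} with the matrix $D$ replaced by the identity $I_{m_0}$, exploiting the fact that everything in sight is simultaneously diagonalized by the orthogonal transform $Q_{m_0}$. Since $A_{m_0}=C_{m_0}(f_0)=Q_{m_0}D_{m_0}(f_0)Q_{m_0}^T$ and $V_{m_0}=I_{m_0}-\omega A_{m_0}=Q_{m_0}(I-\omega D_{m_0}(f_0))Q_{m_0}^T$ are both in the DCT-III algebra, the smoothing inequalities \eqref{hp:1.pre} and \eqref{hp:1.post} reduce, after the change of variables $y=Q_{m_0}^T x$, to pointwise scalar inequalities on the eigenvalues $\lambda_j=f_0(x_j^{[m_0]})\in[0,\|f_0\|_\infty]$. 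Concretely, \eqref{hp:1.post} with $D=I$ becomes $(1-\omega\lambda)^2\lambda\le\lambda-\alpha_{\mathrm{post}}\lambda^2$ for all $\lambda\in[0,\|f_0\|_\infty]$, i.e. $\alpha_{\mathrm{post}}\le(2\omega-\omega^2\lambda)=\omega(2-\omega\lambda)$; with $\omega=1/\|f_0\|_\infty$ the right-hand side is minimized at $\lambda=\|f_0\|_\infty$, giving $\omega(2-\omega\|f_0\|_\infty)=1/\|f_0\|_\infty>0$, so $\alpha_{\mathrm{post}}=1/\|f_0\|_\infty$ works. Similarly \eqref{hp:1.pre} with $D=I$ reads $(1-\omega\lambda)^2\lambda\le\lambda-\alpha_{\mathrm{pre}}(1-\omega\lambda)^2\lambda^2$; dividing by $\lambda$ where $\lambda>0$ and rearranging, one needs $\alpha_{\mathrm{pre}}(1-\omega\lambda)^2\lambda\le 2\omega-\omega^2\lambda=\omega(2-\omega\lambda)$, and with $\omega=2/\|f_0\|_\infty$ one checks $|1-\omega\lambda|\le1$ on $[0,\|f_0\|_\infty]$ so the left side is bounded while the right side stays nonnegative, yielding a positive $\alpha_{\mathrm{pre}}$ independent of $m_0$. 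These are the routine parts.

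The substantive part is the approximation property \eqref{hp:2}, which with $D=I$ reads
\[
\min_{y\in\mathbb{C}^{m_1}}\|x-(P_1^0)^T y\|_2^2\le\gamma\,\|x\|_{A_{m_0}}^2\qquad\text{for all }x\in\mathbb{C}^{m_0}.
\]
Here I would use the explicit form $P_1^0=T_1^0C_{m_0}(p_0)$ together with the decomposition of Lemma~\ref{lemma:tnk}: writing $x=Q_{m_0}z$ and splitting the frequency vector $z$ into its ``low'' half (indices $1,\dots,m_1$) and its ``mirror'' half, the rank of $(P_1^0)^T$ picks out, at each pair of mirror frequencies $\{x_j^{[m_0]},\pi-x_j^{[m_0]}\}$, a one-dimensional subspace spanned by a vector whose two components are proportional to $\cos(\tfrac12\tfrac{x_j}{2})\,p_0(\tfrac{x_j}{2})$ and $-\cos(\tfrac12(\tfrac{x_j}{2}+\tfrac{\pi}{2}))\,p_0(\pi-\tfrac{x_j}{2})$ — exactly the data appearing in \eqref{eq:hatf}. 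Minimizing over $y$ is then an explicit $2\times2$ (or rather a per-frequency scalar) least-squares projection, and the minimum value is a Rayleigh-type quotient whose numerator is $\|x\|_2^2$-like and whose denominator involves $p_0^2(\tfrac{x_j}{2})+p_0^2(\pi-\tfrac{x_j}{2})$, which is strictly positive by \eqref{eq:condiz-bis}. One then has to bound this quotient against $f_0(x_j^{[m_0]})$, and this is where Definition~\ref{def:p} enters: near a zero $x^0$ of $f_0$ the ratio $p_0^2(\pi-x)/f_0(x)$ stays bounded by \eqref{eq:condiz} (or its variants \eqref{eq:condiz-pi}, \eqref{eq:condiz-0} at $x^0=\pi$ or $x^0=0$), which is precisely what prevents the quotient from blowing up as the frequencies accumulate at the zero.

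The main obstacle, therefore, is establishing a uniform (in $m_0$) constant $\gamma$ in the approximation property: one must show that the per-frequency bound obtained from the $2\times2$ least-squares step, combined with the boundedness conditions of Definition~\ref{def:p}, produces a finite bound that does not degrade as $m_0\to\infty$ and as the grid points approach the zeros of $f_0$. The key technical lemma is a continuity/compactness argument: the function $x\mapsto\bigl(\text{least-squares residual weight}\bigr)/f_0(x)$ extends to a bounded function on $[0,\pi]$ precisely because the zeros of $f_0$ are matched by zeros of appropriate order in $p_0$ at the mirror points (Definition~\ref{def:p} and the order bookkeeping of Lemma~\ref{lemma:descrizione-zero}), so its supremum over the finitely many grid points $x_j^{[m_0]}$ is bounded by its supremum over $[0,\pi]$, uniformly in $m_0$. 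Once $\gamma$, $\alpha_{\mathrm{pre}}$, $\alpha_{\mathrm{post}}$ are in hand, Theorem~\ref{teo:conv-TGM} delivers $\|TGM_0\|_{A_{m_0}}\le\sqrt{(1-\alpha_{\mathrm{post}}/\gamma)/(1+\alpha_{\mathrm{pre}}/\gamma)}<1$, i.e. linear convergence, completing the proof.
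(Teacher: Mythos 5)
Your overall route is the same as the paper's: verify the Ruge--St\"uben hypotheses of Theorem \ref{teo:conv-TGM}, reducing the smoothing inequalities to scalar inequalities on the eigenvalues $f_0(x_j^{[m_0]})$ via the DCT-III diagonalization, and reducing the approximation property \eqref{hp:2} via Lemma \ref{lemma:tnk} to a per-mirror-pair $2\times2$ least-squares computation whose quotient is exactly the quantity bounded in \eqref{eq:gamma-A-TGM}--\eqref{eq:gamma-B-TGM}; the uniform constant $\gamma$ then comes from Definition \ref{def:p} by the continuity/compactness argument you describe, with the separate rewriting \eqref{eq:gamma-B-TGM-pi} when $x^0=\pi$. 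That part is faithful to the paper's sketch.

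There is, however, a genuine gap in your verification of the pre-smoothing property \eqref{hp:1.pre}. The scalar condition needed for $\lambda>0$ is $\alpha_{\mathrm{pre}}(1-\omega\lambda)^2\le\omega(2-\omega\lambda)$ (your displayed version carries a spurious extra factor $\lambda$ on the left, but that is not the real issue). With $\omega=2/\|f_0\|_\infty$, as $\lambda$ ranges over eigenvalues approaching $\|f_0\|_\infty$ --- which happens as $m_0\to\infty$ since the grid $x_j^{[m_0]}$ becomes dense in $[0,\pi]$ --- the right-hand side $\omega(2-\omega\lambda)$ tends to $0$ while the coefficient $(1-\omega\lambda)^2$ of $\alpha_{\mathrm{pre}}$ tends to $1$. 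So ``left side bounded, right side nonnegative'' does not yield a positive $m_0$-independent $\alpha_{\mathrm{pre}}$: any admissible $\alpha_{\mathrm{pre}}$ is forced to $0$ in the limit. This is consistent with the paper's own Proposition \ref{prop:cond-smoother}, whose bound $\alpha_s\le\omega_s\min\left\{2,\,(2-\omega_s\|f_s\|_\infty)/(1-\omega_s\|f_s\|_\infty)^2\right\}$ degenerates to $0$ precisely at $\omega_s=2/\|f_s\|_\infty$. To close the argument you must either take $\omega$ strictly below $2/\|f_0\|_\infty$ for the pre-smoother (that same formula then gives a positive constant), or observe that the conclusion survives without a pre-smoothing constant: \eqref{hp:1.post} and \eqref{hp:2} alone already give $\|TGM_0\|_{A}\le\sqrt{1-\alpha_{\mathrm{post}}/\gamma}<1$, which is the claimed linear convergence. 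As written, your step ``yielding a positive $\alpha_{\mathrm{pre}}$ independent of $m_0$'' does not follow.
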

Here, it could be interesting to come back to some key steps in
the proof of the quoted Theorem \ref{ver:hpTGM} in order to
highlight the structure with respect to any point and its mirror
point according to the considered notations. \\
By referring to a proof technique developed in \cite{S-NM-2002},
the claimed thesis is obtained by proving that the right-hand
sides in the inequalities\\
\begin{subequations}
\begin{align}
\gamma & \ge
\frac{1}{d_s(x)}
\left[\cos^2\! \left (\frac{\pi-x}{2}\right)
\frac{p_s^2(\pi-x)}{f_s(x)}\right],
\label{eq:gamma-A-TGM} \\
\gamma & \ge
\frac{1}{d_s(x)}
\left[ \cos^2\! \left (\frac{\pi-x}{2}\right)
\frac{p_s^2(\pi-x)}{f_s(x)} + \cos^2\! \left (\frac{x}{2}\right)
\frac{p_s^2(x)}{f_s(\pi-x)} \right], \label{eq:gamma-B-TGM}\\
%
d_s(x)&=\cos^2\left(\frac{x}{2}\right)p_s^2(x)+\cos^2\left(\frac{\pi-x}{2}\right)p_s^2(\pi-x)
\end{align}
\end{subequations}
are uniformly bounded on the whole domain so that
$\gamma$ is an universal constant.\\
It is evident that (\ref{eq:gamma-A-TGM}) is implied by
(\ref{eq:gamma-B-TGM}). Moreover, both the two terms in
(\ref{eq:gamma-B-TGM}) and in $d_s(x)$ can be exchanged each
other, up to the change of variable $y=\pi-x$.\\
Therefore, if $x^0\ne \pi$ it is evident that Definition
\ref{def:p} ensures the required uniform boundedness   since the
condition
$p_s^2(x)+p_s^2(\pi-x)>0$ implies $d_s(x)>0$.\\
In the case $x^0=\pi$, the inequality (\ref{eq:gamma-B-TGM}) can
be rewritten as
\begin{equation}
\gamma  \ge \frac{1}{\displaystyle
\frac{p_s^2(x)}{\cos^2\left(\frac{\pi-x}{2}\right)}+\frac{p_s^2(\pi-x)}
{\cos^2\left(\frac{x}{2}\right)}}
\, \left[ \frac{p_s^2(\pi-x)}{ \cos^2\! \left (\frac{x}{2}\right)
\,f_s(x)} +
\frac{p_s^2(x)}{\cos^2\! \left (\frac{\pi-x}{2}\right) \, f_s(\pi-x)} \right] \label{eq:gamma-B-TGM-pi}\\
\end{equation}
so motivating the special case in Definition \ref{def:p}.
\section{V-cycle optimal convergence}\label{sec:vcycle}
In this section we propose a suitable modification of Definition
\ref{def:p} with respect to the choice of the polynomial involved
into the projector, that allows us to prove the V-cycle optimal
convergence according to the verification of the inequalities
(\ref{AMGipo2.1})-(\ref{AMGipo2.3}) and the requirement
(\ref{positive_lower_bound}).\\
It is worth stressing that the MGM smoothing properties do not
require a true verification, since (\ref{AMGipo2.1}) and
(\ref{AMGipo2.2}) are exactly the TGM smoothing properties
(\ref{hp:1.pre}) and (\ref{hp:1.post}) (with $D=I$).
\begin{prop} \label{prop:cond-smoother}
Let $A_s=C_{m_s}(f_s)$ for any $s=0,\dots,s_{\min}$, with $f_s\ge
0$, and let $\omega_s$ be such that $0<\omega_s\le
2/\|f_s\|_\infty$. If we choose $\alpha_s$ and $\beta_s$ such that
$\alpha_{s} \le \omega_s \min \left \{2,
              {(2-\omega_s \|f_s\|_\infty ) / (1-\omega_s \|f_s\|_\infty)^2}
              \right \}$
and $\beta_s\leqslant\omega_s(2-\omega_s\|f_s\|_\infty)$
then for any  $x\in\mathbb{C}^m$ the inequalities
\begin{eqnarray}
 \| V_{s,\mathrm{pre}}\, x\|^2_{A_s}
    &\le &
    \|x\|_{A_s}^2 - \alpha_s\, \|V_{s,\mathrm{pre}}\,  x\|_{A_s}^2 \label{pre-a}\\
 \| V_{s,\mathrm{post}}\, x\|^2_{A_s}
    &\le &
    \|x\|_{A_s}^2 - \beta_s\, \|x\|_{A_s}^2 \label{post-a}
\end{eqnarray}
hold true.
%
%
\end{prop}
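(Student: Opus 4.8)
The plan is to diagonalize everything. Since $A_s=C_{m_s}(f_s)=Q_{m_s}D_{m_s}(f_s)Q_{m_s}^T$ and the smoothers are $V_{s,\mathrm{pre}}=V_{s,\mathrm{post}}=I_{m_s}-\omega_s A_s$, all the matrices involved belong to the DCT-III algebra and are simultaneously diagonalized by the orthogonal matrix $Q_{m_s}$ of \eqref{Qn}. Writing $\hat x=Q_{m_s}^Tx$ and $\lambda_j=f_s(x_j^{[m_s]})\in[0,\|f_s\|_\infty]$ for the eigenvalues of $A_s$, each of the norms occurring in \eqref{pre-a}--\eqref{post-a} (with the right-hand side norms read as $\|\cdot\|_{A_s^2}$, consistently with \eqref{AMGipo2.1}--\eqref{AMGipo2.2}) turns into a weighted sum $\sum_j\varphi(\lambda_j)\,|\hat x_j|^2$. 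Hence each inequality holds for every $x\in\mathbb{C}^{m_s}$ if and only if the corresponding scalar inequality holds for every eigenvalue $\lambda\in[0,\|f_s\|_\infty]$. The case $\lambda=0$ is trivial; for $\lambda>0$ one divides through by $\lambda$, so that \eqref{pre-a} becomes
\[
(1-\omega_s\lambda)^2\,(1+\alpha_s\lambda)\le 1 ,
\]
and \eqref{post-a} becomes
\[
(1-\omega_s\lambda)^2+\beta_s\lambda\le 1 ,
\]
both required to hold for all $\lambda\in(0,\|f_s\|_\infty]$.

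For the post-smoothing estimate, expanding the second inequality gives $\lambda\bigl(\omega_s^2\lambda-2\omega_s+\beta_s\bigr)\le 0$, i.e.\ $\beta_s\le\omega_s(2-\omega_s\lambda)$; the right-hand side is decreasing in $\lambda$, so the binding constraint is at $\lambda=\|f_s\|_\infty$, which is exactly the hypothesis $\beta_s\le\omega_s(2-\omega_s\|f_s\|_\infty)$ (a nonnegative quantity because $\omega_s\le 2/\|f_s\|_\infty$).

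For the pre-smoothing estimate I would substitute $t=\omega_s\lambda$, ranging in $(0,\omega_s\|f_s\|_\infty]\subseteq(0,2]$, and set $c=\alpha_s/\omega_s\ge 0$. Expanding $(1-t)^2(1+ct)\le 1$ and dividing by $t>0$ reduces it to
\[
g(t):=c\,t^2+(1-2c)\,t+(c-2)\le 0 .
\]
The key observation is that $g$ is convex, since its leading coefficient $c$ is nonnegative; therefore $g\le 0$ on the whole interval $[0,\omega_s\|f_s\|_\infty]$ as soon as $g$ is $\le 0$ at its two endpoints. Now $g(0)=c-2\le 0$ is equivalent to $\alpha_s\le 2\omega_s$, while $g(\omega_s\|f_s\|_\infty)\le 0$ rearranges to $c\,(1-\omega_s\|f_s\|_\infty)^2\le 2-\omega_s\|f_s\|_\infty$, i.e.\ $\alpha_s\le\omega_s(2-\omega_s\|f_s\|_\infty)/(1-\omega_s\|f_s\|_\infty)^2$; in the degenerate case $\omega_s\|f_s\|_\infty=1$ the second endpoint condition reads $-1\le 0$ and imposes nothing, consistently with the $\min$ in the statement. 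Requiring $\alpha_s$ below the minimum of these two bounds is precisely the hypothesis, which finishes the proof.

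There is no genuine obstacle here: after the simultaneous diagonalization the statement is a one-variable inequality on the spectrum, and the only point worth isolating is that the convexity of the quadratic $g$ lets one verify the pre-smoothing bound by checking only the endpoints $\lambda=0$ and $\lambda=\|f_s\|_\infty$ of the spectral interval. Minor care is needed for the (possibly) semidefinite value $\lambda=0$ and for the degenerate value $\omega_s\|f_s\|_\infty=1$, where the second term of the $\min$ is vacuous.
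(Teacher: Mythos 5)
Your proof is correct and is essentially the standard argument that the paper implicitly relies on (it states Proposition \ref{prop:cond-smoother} without proof, deferring to the cited references): simultaneous diagonalization by $Q_{m_s}$ reduces both inequalities to scalar inequalities in the eigenvalues $\lambda=f_s(x_j^{[m_s]})\in[0,\|f_s\|_\infty]$, the post-smoothing bound follows by monotonicity in $\lambda$, and the convexity of the quadratic $g(t)=ct^2+(1-2c)t+(c-2)$ justifies checking only the two endpoints for the pre-smoothing bound, which reproduces exactly the two terms of the $\min$. You also correctly read the right-hand side norms in \eqref{pre-a}--\eqref{post-a} as $A_s^2$-norms (a typo in the statement), consistently with \eqref{AMGipo2.1}--\eqref{AMGipo2.2}, and you correctly isolate the degenerate cases $\lambda=0$ and $\omega_s\|f_s\|_\infty=1$.
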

\noindent Notice, for instance, that the best bound to $\beta_s$
is given by $1/\|f_s\|_\infty$ and it is obtained by taking
$\omega_s=1/\|f_s\|_\infty$ \cite{ST-SISC-2004,AD-NM-2007}.
\par
Concerning the analysis of the approximation condition
(\ref{AMGipo2.3}) we consider here the case of a generating
function $f_0$ with a single zero at $x^0$. In such a case, the
choice of the polynomial in the projector is more severe with
respect to the case of TGM.
%
%
\begin{defn} \label{def:p-MGM}
Let $x^0 \in [0,\pi)$ a zero of the generating function $f_s$. The
polynomial $p_s$ is chosen in such a way that
\begin{subequations}
\begin{align}
\lim_{x \rightarrow x^0} \frac{p_s(\pi-x)}{f_s(x)} & <
+\infty,\label{eq:condiz-MGM}\\      
 p_s^2(x)+p_s^2(\pi-x) & >  0. \label{eq:condiz-bis-MGM}
\end{align}
\end{subequations}
In the special case $x^0=\pi$, the requirement
(\ref{eq:condiz-MGM}) is replaced by
\begin{subequations}
\begin{align}
\lim_{x \rightarrow x^0=\pi} \,
\frac{p_s(\pi-x)}{\cos\left(\frac{x}{2}\right) \, f_s(x)} & <
+\infty.\label{eq:condiz-pi-MGM}
\end{align}
\end{subequations}
Notice also that in the special case $x^0=0$ the requirement
(\ref{eq:condiz-MGM}) can be weakened as
\begin{subequations}
\begin{align}
\lim_{x \rightarrow x^0=0} \,
\frac{\cos\left(\frac{\pi-x}{2}\right) \,p_s(\pi-x)}{ f_s(x)} & <
+\infty. \label{eq:condiz-0bis-MGM}
\end{align}
\end{subequations}
\end{defn}
\begin{prop} \label{prop:cond-approx}
 Let $A_s=C_{m_s}(f_s)$ for any $s=0,\dots,s_{\min}$, with $f_s\ge
0$.
  Let $P_{s+1}^s=T_{s+1}^s C_s(p_s)$, where
  $p_s(x)$ is fulfilling \emph{(\ref{eq:condiz-MGM})}
  \emph{(}or \emph{(\ref{eq:condiz-pi-MGM})}\emph{)}
  and \emph{(\ref{eq:condiz-bis-MGM})}. Then, for any
$s=0,\ldots,s_{\min}-1$,
 there exists $\gamma_s>0$  independent of $m_s$ such that
\begin{equation} \label{AMGipo2.3-duplicato}
{\|CGC_s x\|}_{A_s}^2
       \leq \gamma_s\,{\|x\|}_{A_s^2}^2\quad
       \textrm{ for any } x\in \mathbb{C}^{m_s},
\end{equation}
where $CGC_s$ is defined as in \emph{(\ref{eq:CGC_s})}.
\end{prop}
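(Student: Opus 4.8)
The plan is to reduce the estimate \eqref{AMGipo2.3-duplicato} to a pointwise (in the Fourier/frequency variable) inequality, exactly as was done for the TGM in the discussion following Theorem \ref{ver:hpTGM}, and then to exploit the strengthened condition \eqref{eq:condiz-MGM} of Definition \ref{def:p-MGM} to get a bound that is uniform not only in $m_s$ but also usable in the recursion. First I would recall that $CGC_s = I_s - (P_{s+1}^s)^H A_{s+1}^{-1} P_{s+1}^s A_s$ is an $A_s$-orthogonal projection (onto the $A_s$-orthogonal complement of the range of $(P_{s+1}^s)^H$), so that ${\|CGC_s x\|}_{A_s}^2 = \min_{y\in\mathbb{C}^{m_{s+1}}} \|x - (P_{s+1}^s)^H y\|_{A_s}^2$; hence it suffices to produce, for each $x$, a single vector $y$ for which $\|x-(P_{s+1}^s)^H y\|_{A_s}^2 \le \gamma_s \|x\|_{A_s^2}^2$. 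This is the analogue of hypothesis \eqref{hp:2} in Theorem \ref{teo:conv-TGM} with $D$ replaced by $A_s$ (the $D=I$ convention mentioned in the text, applied one level up, gives the $A_s^2$-norm on the right).

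Next I would diagonalize everything by $Q_s$. Writing $x = Q_s \hat x$ and using Lemma \ref{lemma:tnk} — in particular the decomposition $T_{s+1}^s Q_s = Q_{s+1}[\Phi_{s+1},\Theta_{s+1}\Pi_{s+1}]$ and the fact that $C_s(p_s)$, $A_s=C_s(f_s)$ are diagonalized by $Q_s$ — the operator $(P_{s+1}^s)^H$ becomes, in the frequency coordinates, a $2\times 1$ block map mixing each frequency $x_j^{[m_{s+1}]}$ with its mirror. Choosing $y$ frequency-by-frequency to minimize the resulting scalar $2$-term quadratic, the quantity $\|x-(P_{s+1}^s)^H y\|_{A_s}^2 / \|x\|_{A_s^2}^2$ is controlled by the supremum over $x\in[0,\pi]$ of an explicit rational expression in $f_s$ and $p_s$ evaluated at $x$ and $\pi-x$. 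Repeating the algebra that produced \eqref{eq:gamma-B-TGM} and \eqref{eq:gamma-B-TGM-pi}, but now tracking the extra factor of $f_s$ coming from the $A_s^2$-norm (rather than the $A_s D_s^{-1} A_s$-norm) in the denominator, one arrives at the requirement that
\begin{equation}
\gamma_s \ge \frac{1}{d_s(x)}\left[\cos^2\!\left(\frac{\pi-x}{2}\right)\frac{p_s^2(\pi-x)}{f_s^2(x)} + \cos^2\!\left(\frac{x}{2}\right)\frac{p_s^2(x)}{f_s^2(\pi-x)}\right]
\end{equation}
be finite uniformly in $x\in[0,\pi]$, with $d_s(x)=\cos^2(x/2)p_s^2(x)+\cos^2((\pi-x)/2)p_s^2(\pi-x)$ as before, and the analogous modification at $x^0=\pi$. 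The key point is that the factor $f_s^2$ in the denominator is what forces the passage from $p_s^2/f_s$ bounded (Definition \ref{def:p}) to $p_s/f_s$ bounded (Definition \ref{def:p-MGM}): near the zero $x^0$ the offending term behaves like $p_s^2(\pi-x)/f_s^2(x)$, which is bounded precisely when $p_s(\pi-x)/f_s(x)$ stays bounded, i.e. exactly condition \eqref{eq:condiz-MGM}. Away from $x^0$ the denominator $d_s(x)$ is bounded below by \eqref{eq:condiz-bis-MGM} and $f_s$ is bounded below, so the expression is automatically bounded there; the special cases $x^0=0$ and $x^0=\pi$ are handled by absorbing the vanishing $\cos^2$ factors, which is what \eqref{eq:condiz-0bis-MGM} and \eqref{eq:condiz-pi-MGM} are designed to do. Continuity of the rational function on the compact set $[0,\pi]$ minus the finitely many problematic points, together with finiteness of the one-sided limits there, then yields a finite $\gamma_s$, and since $f_s$, $p_s$ are fixed polynomials (depending on $s$ but not on $m_s$) this $\gamma_s$ is independent of $m_s$; positivity is clear since the bracketed quantity is nonnegative and not identically zero.

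The main obstacle I anticipate is not the limiting analysis near the zero — that is essentially forced by the definition — but rather carrying out cleanly the reduction from the matrix inequality to the scalar supremum: one must verify that the frequency-wise minimization over $y$ genuinely decouples across mirror pairs (it does, because $\Pi_{s+1}$ only permutes and $\Phi_{s+1},\Theta_{s+1}$ are diagonal), handle the self-mirror frequencies $x=0$ and, when $m_s$ is even, $x=\pi/2$ (and the endpoint $x=\pi$, which is a single frequency) as degenerate cases of the $2\times1$ block, and make sure the factor $\cos^2$ arising from $\Phi,\Theta$ is paired correctly with the $p_s^2$ factor from $C_s(p_s)$ and the $f_s$ factors from $A_s$ versus $A_s^2$. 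Once the bookkeeping matches what appears in \eqref{eq:gamma-B-TGM}, the rest is the same boundedness argument as in the TGM case, strengthened by one power of $f_s$ in the denominator.
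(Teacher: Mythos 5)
Your argument follows the paper's proof essentially step for step: both exploit that $CGC_s$ is an $A_s$-orthogonal projector to symmetrize the target inequality into $\widetilde{CGC_s}\le\gamma_s A_s$, both use Lemma \ref{lemma:tnk} to decouple this into $2\times 2$ blocks indexed by mirror pairs of frequencies, and both reduce the block inequality to the uniform boundedness of a scalar ratio whose control is exactly what (\ref{eq:condiz-MGM}) (resp.\ (\ref{eq:condiz-pi-MGM}), (\ref{eq:condiz-0bis-MGM})) is designed to deliver. The one bookkeeping slip is the denominator of your pointwise ratio: dividing the exact block expression through by $f_s(x)f_s(\pi-x)$ puts the $f_s$ weights into the denominator as well, so the correct normalizer is $\hat d_s(x)=\cos^2\!\left(\frac{x}{2}\right)p_s^2(x)/f_s(\pi-x)+\cos^2\!\left(\frac{\pi-x}{2}\right)p_s^2(\pi-x)/f_s(x)$ (the paper's (\ref{eq:gamma_s})) rather than the TGM denominator $d_s(x)$ --- a harmless discrepancy, since $\hat d_s(x)\ge d_s(x)/\|f_s\|_\infty$ means your (slightly stronger) bound implies the required one, and the quantity you wrote is still uniformly bounded under Definition \ref{def:p-MGM}.
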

\begin{proof}
Since
\[
CGC_s = I_s - (P_{s+1}^{s})^T (P_{s+1}^{s} A_{s}
(P_{s+1}^{s})^T)^{-1} P_{s+1}^{s} A_s
\]
is an unitary projector, it holds that $CGC_s^T \, A_s\, CGC_s =
A_s\, CGC_s$. Therefore, the target inequality
(\ref{AMGipo2.3-duplicato}) can be simplified and symmetrized,
giving rise to the matrix inequality
\begin{equation}\label{eq:MGM-sym}
\widetilde{CGC_s}= I_s -A_s^{1/2} (P_{s+1}^{s})^T (P_{s+1}^{s}
A_{s} (P_{s+1}^{s})^T)^{-1} P_{s+1}^{s} A_s^{1/2} \le \gamma_s
A_s.
\end{equation}
%
Hence,  by invoking Lemma \ref{lemma:tnk}, $Q_s^T
\widetilde{CGC_s} Q_s$   can be permuted into a $2\times 2$ block
diagonal matrix whose $j$th block, $j=1,\ldots,m_{s+1}$, is given
by the rank-$1$ matrix (see \cite{FS-CALCOLO-1991} for the
analogous $\tau$ case)
\[
    I_2-
    \frac{1}{c_j^2 +s_{j}^2}
    \left[
    \begin{array}{ll}
   c_j^2  & c_j s_{j}  \\
   c_j s_{j} & s_{j}^2
    \end{array}
    \right],
\]
where
\[
c_j= 
\cos \left(\frac{x_j^{[m_s]}}{2}\right)p^2f(x_j^{[m_s]})
\quad
s_{j}=- 
\cos \left(\frac{\pi-
x_{j}^{[m_s]}}{2}\right)p^2f(\pi - x_j^{[m_s]}).
\]
As in the proof of the TGM convergence, due to the continuity of
$f_s$ and $p_s$, (\ref{eq:MGM-sym}) is proven if the
right-hand sides in the inequalities\\
\begin{subequations}
\begin{align}
\gamma_s & \ge
\frac{1}{\widetilde d_s(x)}
\left[\cos^2\! \left (\frac{\pi-x}{2}\right)
\frac{p_s^2f_s(\pi-x)}{f_s(x)}\right]
\label{eq:gamma-A-MGM} \\
\gamma_s & \ge
\frac{1}{\widetilde d_s(x)}
\left[ \cos^2\! \left (\frac{\pi-x}{2}\right)
\frac{p_s^2f_s(\pi-x)}{f_s(x)} + \cos^2\! \left
(\frac{x}{2}\right)
\frac{p_s^2f_s(x)}{f_s(\pi-x)} \right] \label{eq:gamma-B-MGM}\\
%
\widetilde
d_s(x)&=\cos^2\left(\frac{x}{2}\right)p_s^2f_s(x)+\cos^2\left(\frac{\pi-x}{2}\right)p_s^2f(\pi-x)
\end{align}
\end{subequations}
are uniformly bounded on the whole domain so that $\gamma_s$ are
universal constants.\\
Once again, it is evident that (\ref{eq:gamma-A-MGM}) is implied
by (\ref{eq:gamma-B-MGM}). Moreover, both the terms in
(\ref{eq:gamma-B-MGM}) and in $\widetilde d_s(x)$ can be exchanged
each
other, up to the change of variable $y=\pi-x$.\\
Therefore, if $x^0\ne \pi$, (\ref{eq:gamma-B-MGM}) can be
rewritten as
\begin{equation} \label{eq:gamma_s}
\gamma_s  \ge
\frac{1}{\hat d_s(x)}
\left[ \cos^2\! \left (\frac{\pi-x}{2}\right)
\frac{p_s^2(\pi-x)}{f_s^2(x)} + \cos^2\! \left (\frac{x}{2}\right)
\frac{p_s^2(x)}{f_s^2(\pi-x)} \right]
\end{equation}
where
\[
\hat d_s(x)=\cos^2\left(\frac{x}{2}\right)
\frac{p_s^2(x)}{f_s(\pi-x)}+\cos^2\left(\frac{\pi-x}{2}\right)\frac{p_s^2(\pi-x)}{f_s(x)},
\]
so that Definition \ref{def:p-MGM} ensures the
required uniform boundedness.\\
In the case $x^0=\pi$, the inequality (\ref{eq:gamma-B-MGM}) can
be rewritten as
\begin{eqnarray}
\gamma_s  &\ge& \frac{1}{\displaystyle
\frac{p_s^2(x)}{\cos^2\left(\frac{\pi-x}{2}\right)f_s(\pi-x)}+\frac{p_s^2(\pi-x)}
{\cos^2\left(\frac{x}{2}\right)f_s(x)}}
\, \left[ \frac{p_s^2(\pi-x)}{ \cos^2\! \left (\frac{x}{2}\right)
\,f_s^2(x)} \right. \nonumber \\
&& \left. \hskip 5cm  + \frac{p_s^2(x)}{\cos^2\! \left
(\frac{\pi-x}{2}\right) \, f_s^2(\pi-x)} \right]
\label{eq:gamma-B-MGM-pi}
\end{eqnarray}
so motivating the special case in Definition \ref{def:p-MGM}.
\end{proof}
\begin{rem}
Notice that in the case of pre-smoothing iterations and under the
assumption $V_{s,\mathrm{pre}}$ nonsingular, the approximation
condition
\begin{equation} \label{AMGipo2.3-presmother}
{\|CGC_s V_{s,\mathrm{pre}}^{\nu_\mathrm{pre}} x\|}_{A_s}^2
       \leq \gamma_s\,{\|V_{s,\mathrm{pre}}^{\nu_\mathrm{pre}}x\|}_{A_s^2}^2
       \textrm{ for any } x\in \mathbb{C}^{m_s},
\end{equation}
is equivalent to the condition, in matrix form, $\widetilde{CGC_s}
\le \gamma_s A_s$ obtained in Proposition \ref{prop:cond-approx}.
\end{rem}
%
%
In Propositions \ref{prop:cond-smoother} and
\ref{prop:cond-approx} we have obtained that for every $s$
(independent of $m=m_0$) the constants $\alpha_s$, $\beta_s$, and
$\gamma_s$ are absolute values not depending on $m=m_0$, but only
depending on the functions $f_s$ and $p_s$. Nevertheless,  in
order to prove the MGM optimal convergence according to Theorem
\ref{teo:conv-MGM}, we should verify at least one between the
following inf--min conditions \cite{ADS-SIMAX-2004}:
\begin{equation}\label{eq:inf-min}
\delta_{\mathrm{pre}} = \inf_{m_0} \min_{0\le s\le \log_2(m_0)}
{\frac{\alpha_s}{\gamma_s}}>0,
\quad
\delta_{\mathrm{post}} = \inf_{m_0} \min_{0\le s\le \log_2(m_0)}
{\frac{\beta_s}{\gamma_s}}>0.
\end{equation}
First,  we consider the inf-min requirement (\ref{eq:inf-min}) by
analyzing the case of a generating function $\tilde f_0$ with a
single zero at $x^0=0$. \\
It is worth stressing that in such a case the DCT-III matrix
$\tilde A_{m_0}=C_{m_0}(\tilde f_0)$ is singular since $0$ belongs
to the set
of grid points $x^{[m_0]}_{j}=(j-1)\pi/m_0$, $j=1,\ldots,m_0$.\\
Thus, the matrix $\tilde A_{m_0}$ is replaced by
\begin{equation*}
  A_{m_0} =C_{m_0}(f_0)=  C_{m_0}(\tilde f_0)  +
  \tilde f_0\left(x^{[m_0]}_{2}\right )
  \cdot
  \frac{e e^T}{m_0}
\end{equation*}
with $e=[1, \ldots, 1]^T \in \mathbb{R}^{m_0}$ and where the
rank-$1$ additional term is known as Strang correction
\cite{T-LAA-1995}. Equivalently,
 $\tilde f_0 \ge 0$  is replaced by the generating
function
\begin{equation}\label{eq:strcorr}
     f_0=\tilde f_0+ \tilde f_0\left(x^{[m_0]}_{2}\right )    \Chi_{w^{[m_s]}_{1}+2\pi \mathbb{Z}}>0,
\end{equation}
where $\Chi_X$ is the characteristic function of the set $X$ and $w^{[m_0]}_{1}=x^0=0$.\\
In Lemma \ref{lem:qpsi} is reported the law to which the
generating functions  are subjected at the coarser levels. With
respect to this target, it is useful to consider the following
factorization result: let $f\ge 0$ be a trigonometric polynomial
with a single zero at $x^0$ of order $2q$.  Then, there exists a
positive trigonometric polynomial $\psi$ such that
\begin{equation}\label{eq:qpsi-ref}
    f(x) = \left[1-\cos(x-x_0)\right]^q\, \psi(x).
\end{equation}
Notice also that, according to Lemma \ref{lemma:descrizione-zero},
the location of the zero is never shifted at the subsequent
levels.
\begin{lem}\label{lem:qpsi}
Let $f_0(x)=\tilde f_0(x)+c_0\Chi_{2\pi \mathbb{Z}}(x)$, with
$\tilde f_0(x)={[1-\cos(x)]}^q\psi_0(x)$, $q$ being a positive
integer and $\psi_0$ being a positive trigonometric polynomial and
with $c_0=\tilde f_0\left(x^{[m_0]}_2\right)$.
Let $p_s(x)=[1+\cos(x)]^q$ for any $s=0,\dots,s_{\min}-1$. Then,
under the same assumptions of Lemma \ref{lemma:tnk}, each
generating function $f_s$ is given by
\[
f_s(x)=\tilde f_s(x) +c_s\Chi_{2\pi \mathbb{Z}}(x), \quad \tilde
f_s(x)= [1-\cos(x)]^q \psi_s(x).
\]
The sequences $\{\psi_s\}$ and $\{c_s\}$ are defined as
\[
\psi_{s+1}=\Phi_{q,p_s}(\psi_s), \qquad    c_{s+1} = c_s p_s^2(0),
\qquad s=0,\dots,s_{\min}-1,
\]
where $\Phi_{q,p}$ is an operator such that
\begin{equation}\label{eq:Phi}
    \left[\Phi_{q,p}(\psi)\right](x)
    =
    \frac{1}{2^{q+1}}
    \left[
      (\varphi\, p \, \psi)\left(\frac{x}{2}\right)
      +\,
      (\varphi\, p \, \psi)\left(\pi-\frac{x}{2}\right)\!
    \right],
  \end{equation}
with $\varphi(x)=1+\cos(x)$.
 Moreover, each $\tilde f_s$ is a trigonometric polynomial
that vanishes only at $2\pi \mathbb{Z}$ with the same order $2q$
as $\tilde f_0$.
\end{lem}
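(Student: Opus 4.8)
\textbf{Proof plan for Lemma \ref{lem:qpsi}.}

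The plan is to argue by induction on the level $s$, the base case $s=0$ being the hypothesis. Assume that $f_s(x)=\tilde f_s(x)+c_s\Chi_{2\pi\mathbb{Z}}(x)$ with $\tilde f_s(x)=[1-\cos(x)]^q\psi_s(x)$ and $\psi_s$ a positive trigonometric polynomial. First I would apply the projection formula \eqref{eq:hatf} of Lemma \ref{lemma:tnk} with $f_s$ in place of $f_s$ and $p_s(x)=[1+\cos(x)]^q$; this expresses $f_{s+1}$ as a sum of two ``mirror'' contributions evaluated at $x/2$ and $\pi-x/2$. The characteristic-function part needs separate bookkeeping: since $\Chi_{2\pi\mathbb{Z}}$ contributes only when its argument lies in $2\pi\mathbb{Z}$, and the arguments appearing in \eqref{eq:hatf} are $x/2$ and $\pi-x/2$, only the branch with $x/2\in 2\pi\mathbb{Z}$, i.e. $x\in 4\pi\mathbb{Z}\subseteq 2\pi\mathbb{Z}$, survives, weighted by $\cos^2(x/4)\,p_s^2(x/2)$ evaluated at $x=0$, which is $p_s^2(0)$; this yields the stated recursion $c_{s+1}=c_s\,p_s^2(0)$ and confirms that the discrete correction stays supported on $2\pi\mathbb{Z}$.

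For the smooth part, I would substitute $\tilde f_s(x)=[1-\cos(x)]^q\psi_s(x)$ and $p_s(x)=[1+\cos(x)]^q$ into \eqref{eq:hatf} and collect the trigonometric identities: $2\cos^2(x/4)=1+\cos(x/2)=\varphi(x/2)$, and the half-angle relations $1-\cos(x/2)$, $1+\cos(x/2)$ combine so that the product $[1-\cos(x/2)]^q[1+\cos(x/2)]^q=[1-\cos^2(x/2)]^q=[\sin^2(x/2)]^q$, while on the mirror branch $[1-\cos(\pi-x/2)]^q[1+\cos(\pi-x/2)]^q=[\sin^2(\pi-x/2)]^q=[\sin^2(x/2)]^q$ as well, using $\sin(\pi-x/2)=\sin(x/2)$. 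The common factor $[\sin^2(x/2)]^q=\big(\tfrac{1-\cos x}{2}\big)^q$ factors out of both terms and matches the desired $[1-\cos(x)]^q$ up to the constant $2^{-q}$; what remains, after absorbing the constants $2^{-q}$ (from $\sin^2$) and the two factors of $1/2$ from $\cos^2(\cdot/2)=\tfrac12\varphi(\cdot)$ into the prefactor $2^{-(q+1)}$, is exactly $\tfrac{1}{2^{q+1}}\big[(\varphi\,p_s\,\psi_s)(x/2)+(\varphi\,p_s\,\psi_s)(\pi-x/2)\big]$, i.e. $[\Phi_{q,p_s}(\psi_s)](x)$. Hence $\tilde f_{s+1}(x)=[1-\cos x]^q\,\psi_{s+1}(x)$ with $\psi_{s+1}=\Phi_{q,p_s}(\psi_s)$.

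It remains to check that $\psi_{s+1}$ is again a positive trigonometric polynomial and that $\tilde f_{s+1}$ vanishes only on $2\pi\mathbb{Z}$ with order exactly $2q$; this also reproves in the present special case the relevant part of Lemma \ref{lemma:descrizione-zero}. Positivity is the delicate point and the step I expect to be the main obstacle: $\varphi(x)=1+\cos x$ itself vanishes at $x=\pi$, so each individual summand $(\varphi\,p_s\,\psi_s)(x/2)$ can vanish, and one must verify that the two summands in \eqref{eq:Phi} never vanish simultaneously. I would argue that $(\varphi\,p_s)(x/2)=0$ forces $x/2\equiv\pi\pmod{2\pi}$, i.e. $x\equiv 2\pi\pmod{4\pi}$, in which case the mirror argument $\pi-x/2\equiv-\pi\equiv\pi\pmod{2\pi}$... no: more carefully, $\varphi(\theta)p_s(\theta)=(1+\cos\theta)^{q+1}$ vanishes exactly at $\theta\in\pi+2\pi\mathbb{Z}$; so the first summand vanishes iff $x/2\in\pi+2\pi\mathbb{Z}$ and the second iff $\pi-x/2\in\pi+2\pi\mathbb{Z}$, i.e. $x/2\in 2\pi\mathbb{Z}$; these two conditions are mutually exclusive, so at least one summand is strictly positive (recall $\psi_s>0$ by the inductive hypothesis), whence $\psi_{s+1}>0$ everywhere. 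That $\psi_{s+1}$ is a genuine trigonometric polynomial follows because $\Phi_{q,p_s}$ maps trigonometric polynomials to trigonometric polynomials (the substitutions $x\mapsto x/2$ and $x\mapsto\pi-x/2$ followed by the symmetrizing sum kill the half-integer frequencies), and since $\tilde f_{s+1}=[1-\cos x]^q\psi_{s+1}$ with $\psi_{s+1}>0$, the zero set of $\tilde f_{s+1}$ is precisely that of $[1-\cos x]^q$, namely $2\pi\mathbb{Z}$, with order exactly $2q$. The induction then closes, completing the proof.
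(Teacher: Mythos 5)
Your argument is correct and follows essentially the same route as the paper, whose proof consists of the single remark that the claim is a direct consequence of Lemma \ref{lemma:tnk} together with the propagation of positivity from $\psi_0$ to each $\psi_s$; your computation with the half-angle identities and the factor $[\sin^2(x/2)]^q=2^{-q}[1-\cos x]^q$ is exactly the calculation being alluded to. Your explicit verification that the two summands of $\Phi_{q,p_s}$ cannot vanish simultaneously (since $(\varphi p_s)(\theta)$ vanishes only for $\theta\in\pi+2\pi\mathbb{Z}$, and the conditions $x/2\in\pi+2\pi\mathbb{Z}$ and $x/2\in 2\pi\mathbb{Z}$ are mutually exclusive) is the genuinely delicate point that the paper leaves implicit, and you handle it correctly.
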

\begin{proof}
The claim is a direct consequence of Lemma \ref{lemma:tnk}.
Moreover, since the function $\psi_0$ is positive by assumption,
the same holds true for each function $\psi_s$.
\end{proof}
Hereafter, we make use of the following notations:  for a given
function $f$, we will write $M_f=\sup_x|f|$, $m_f=\inf_x|f|$ and
$\mu_\infty(f)=M_f/m_f$.\\
Now, if $x\in (0,2\pi)$ we can give an upper bound for the
left-hand side $R(x)$ in (\ref{eq:gamma_s}), since it holds that
\begin{eqnarray*}
R(x)&=&  \frac{
    \dfrac{\cos^2\left(\frac{x}{2}\right) p_s^2(x)}{f_s^2(\pi- x)}+\dfrac{\cos^2\left(\frac{\pi-x}{2}\right)p_s^2(\pi- x)}{f_s^2(x)}
  }{
    \dfrac{\cos^2\left(\frac{x}{2}\right)p_s^2(x)}{f_s(\pi- x)}+\dfrac{\cos^2\left(\frac{\pi-x}{2}\right)p_s^2(\pi- x)}{f_s(x)}
  }\\
  &=&
  \frac{
    \dfrac{\cos^2\left(\frac{x}{2}\right)}{\psi_s^2(\pi-x)}+\dfrac{\cos^2\left(\frac{\pi-x}{2}\right)}{\psi_s^2(x)}
  }{
    \dfrac{\cos^2\left(\frac{\pi-x}{2}\right)p_s(x)}{\psi_s(\pi-x)}+\dfrac{\cos^2\left(\frac{\pi-x}{2}\right)p_s(\pi- x)}{\psi_s(x)}
  }\\
  &\le &
  \frac{M_{\psi_s}}{m_{\psi_s}^2} \frac{1}{\cos^2\left(\frac{x}{2}\right)p_s(x) +
  \cos^2\left(\frac{\pi-x}{2}\right)p_s(\pi-x)}\\
  &\le &
  \frac{M_{\psi_s}}{m_{\psi_s}^2},
\end{eqnarray*}
we can consider $\gamma_s=M_{\psi_s}/m_{\psi_s}^2$. In the case
 $x=0$, since $p_s(0)=0$,  it holds  $R(0)=1/f_s(\pi)$, so that we have also to require
$1/f_s(\pi)\le \gamma_s$. However, since $1/f_s(\pi) \le
{M_{\psi_s}}/{m_{\psi_s}^2}$, we take
$\gamma_s^*=M_{\psi_s}/m_{\psi_s}^2$ as the best value.\\
In (\ref{AMGipo1.b-bis}), by choosing
$\omega_s^*=\|f_s\|_\infty^{-1}$,
 we simply find $\beta_s^*= \|f_s\|_\infty^{-1}\ge
1/(2^q M_{\psi_s})$ and as a consequence, we obtain
\begin{equation}\label{inf-min2}
  \frac{\beta_s^*}{\gamma_s^*}
  \ge
  \frac1{2^qM_{\psi_s}}
  \cdot
  \frac{m_{\psi_s}^2}{M_{\psi_s}}
  =
  \frac1{2^q\mu_\infty^2(\psi_s)}.
\end{equation}
%
A similar relation can be found in the case of a pre-smoothing
iteration. Nevertheless, since it is enough to prove one between
the inf-min conditions, we focus our attention on condition
(\ref{inf-min2}). So,  to enforce the inf--min condition
(\ref{eq:inf-min}), it is enough to prove the existence of an
absolute constant $L$ such that $\mu_\infty(\psi_s)\leqslant
L<+\infty$ uniformly in order to deduce that
${\|MGM_0\|}_{A_0}\leqslant\sqrt{1-(2^q L^2)^{-1}}<1$.
\begin{prop}\label{prop:convergenza}
Under the same assumptions of Lemma {\rm \ref{lem:qpsi}}, let us
define $\psi_s=[\Phi_{p_s,q}]^s(\psi)$ for every $s\in
\mathbb{N}$, where $\Phi_{p,q}$ is the linear operator defined as
in \eqref{eq:Phi}. Then, there exists a positive polynomial
$\psi_\infty$ of degree $q$
such that $\psi_s$ uniformly converges to $\psi_\infty$, and
moreover there exists a positive real number $L$ such that
$\mu_\infty(\psi_s)\leqslant L$ for any $s\in \mathbb{N}$.
\end{prop}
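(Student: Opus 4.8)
The plan is to study the linear operator $\Phi_{p,q}$ acting on the affine space of polynomials $\psi$ of degree at most $q$ that are strictly positive on $[0,2\pi]$, and to prove that it is a contraction (or at least eventually contractive) in a suitable norm, so that the iterates $\psi_s=[\Phi_{p_s,q}]^s(\psi)$ converge to a fixed point $\psi_\infty$. First I would observe, as in Lemma \ref{lem:qpsi}, that with the choice $p_s(x)=[1+\cos(x)]^q$ the operator is the \emph{same} at every level, call it $\Phi=\Phi_{p,q}$, so we are genuinely iterating one fixed linear operator. The key structural fact is that $\Phi$ maps trigonometric polynomials of degree $\le q$ into trigonometric polynomials of degree $\le q$: indeed $(\varphi\,p\,\psi)$ has degree $\le 1+q+q=2q+1$, and the symmetrization $g(x/2)+g(\pi-x/2)$ applied to a degree-$(2q+1)$ polynomial $g$ kills all odd-index Fourier modes and halves the degree, landing in degree $\le q$. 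This makes $\Phi$ a linear endomorphism of a finite-dimensional space $W_q$ of dimension $q+1$, which is the crucial reduction: convergence of $\psi_s$ becomes a statement about the spectrum of a $(q+1)\times(q+1)$ matrix.

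Next I would identify the fixed point and control the spectrum. The natural candidate for $\psi_\infty$ comes from the requirement that $f_\infty(x)=[1-\cos(x)]^q\psi_\infty(x)$ be a genuine fixed generating function of the coarsening recursion \eqref{eq:hatf}; a clean way to pin it down is to check that $\Phi$ has $1$ as an eigenvalue with a positive eigenfunction (one expects $\psi_\infty$ proportional to something like a constant, or to $[1+\cos x]^{?}$-type factor — this must be computed from \eqref{eq:Phi}), and that all other eigenvalues are strictly inside the unit disk. For the strict spectral gap I would use a Perron--Frobenius / positivity argument: $\Phi$ is a positive operator (it sends nonnegative functions to nonnegative functions, being a sum of nonnegative-weight evaluations), and in fact $\Phi$ maps the cone of nonnegative polynomials into the interior of the positive cone after finitely many steps, since $\varphi\,p=[1+\cos x]^{q+1}$ is strictly positive except at $x=\pi$ and the two-point symmetrization mixes a neighborhood of each point with its mirror. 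Hence by a Perron--Frobenius theorem for cone-preserving linear maps, the spectral radius is a simple eigenvalue with positive eigenvector and dominates the rest of the spectrum strictly; normalizing so that this eigenvalue equals $1$ (which is forced by a conservation property one reads off \eqref{eq:Phi}, e.g.\ evaluating at a suitable point or integrating) gives $\psi_s\to\psi_\infty$ uniformly with geometric rate.

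Finally, the uniform bound $\mu_\infty(\psi_s)\le L$ follows almost for free once convergence is established: since $\psi_\infty$ is a positive polynomial on a compact set, $0<m_{\psi_\infty}\le M_{\psi_\infty}<\infty$, and uniform convergence gives $m_{\psi_s}\to m_{\psi_\infty}>0$ and $M_{\psi_s}\to M_{\psi_\infty}<\infty$; thus $\mu_\infty(\psi_s)=M_{\psi_s}/m_{\psi_s}$ is a convergent sequence of positive reals, hence bounded, and one may take $L=\sup_s \mu_\infty(\psi_s)$, which is finite. One should also check the base case, namely that $\psi_0$ (coming from $\tilde f_0$ and the Strang correction) is indeed positive and of degree $\le q$, which is given by the hypotheses of Lemma \ref{lem:qpsi}. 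I expect the main obstacle to be the positivity/spectral-gap step: verifying rigorously that $\Phi$ (or a power of it) maps the positive cone strictly inside itself, and correctly normalizing the Perron eigenvalue to $1$, requires care with the boundary behavior at $x=0$ and $x=\pi$ where the weights $\cos^2(x/2)$, $\cos^2((\pi-x)/2)$ and $\varphi$ degenerate; handling those degeneracies — showing they do not break irreducibility of the cone map — is the delicate part, whereas the finite-dimensional reduction and the final $\mu_\infty$ bound are routine.
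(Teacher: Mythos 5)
Your plan is correct and follows essentially the same route as the paper: reduce $\Phi_{q,p}$ to a fixed finite-dimensional linear map on trigonometric polynomials of degree at most $q$ (the symmetrization killing odd Fourier modes), apply Perron--Frobenius to the resulting nonnegative irreducible matrix with the eigenvalue $1$ normalized by the conservation law $\psi_{s+1}(0)=\psi_s(0)$, deduce uniform convergence to a strictly positive limit $\psi_\infty$, and conclude the bound on $\mu_\infty(\psi_s)$ from the positivity of the limit together with the finite-level bounds. The delicate point you flag (irreducibility/primitivity despite the degeneracy of $p$ at $\pi$) is exactly what the paper resolves by exhibiting the explicit positive entries $b^{(q)}_j$ of the matrix representation.
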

\begin{proof}
Due to the periodicity and to the cosine expansions of all the
involved functions, the operator $\Phi_{q,p}$ in (\ref{eq:Phi})
can be rewritten as
\begin{equation}\label{eq:Phi-rewritten}
    \left[\Phi_{q,p}(\psi)\right](x)
    =
    \frac{1}{2^{q+1}}
    \left[
      (\varphi\, p \, \psi)\left(\frac{x}{2}\right)
      +\,
      (\varphi\, p \, \psi)\left(\pi+\frac{x}{2}\right)\!
    \right].
  \end{equation}
The representation of $\Phi_{q,p}$ in the Fourier basis (see
Proposition 4.8 in \cite{ADS-SIMAX-2004}) leads to an operator
from $\mathbb{R}^{m(q)}$ to $\mathbb{R}^{m(q)}$, $m(q)$ proper
constant  depending only on $q$, which is identical to the
irreducible nonnegative matrix $\bar \Phi_{q}$ in equation
$(4.14)$ of \cite{ADS-SIMAX-2004}, with $q+1$ in place of $q$. \\
As a consequence, the claimed thesis follows by referring to the
Perron--Frobenius theorem \cite{L-1979,V-1962} according to the
very same proof technique considered in \cite{ADS-SIMAX-2004}.
\end{proof}
Lastly, by taking into account all the previous results, we can
claim the optimality of the proposed MGM.
\begin{thm}\label{th:mgmopt}
  Let $\tilde f_0$ be a even nonnegative trigonometric polynomial
  vanishing at $0$ with order $2q$.
  Let $m_0=m >m_1>  \ldots >m_s> \ldots > m_{s_{\min}}$, $m_{s+1}=m_s/2$.
  For any $s=0,\dots,m_{s_{\min}}-1$, let
  $P_{s+1}^{s}$ be as in Proposition \ref{prop:cond-approx} with
  $p_s(x)=[1+\cos(x)]^q$, and let
  $V_{s,\mathrm{post}}=I_{m_s}-A_{m_s}/\|f_s\|_\infty$.
  If we set $A_{m_0}=C_{m_0}(\tilde f_0+c_0\Chi_{2\pi \mathbb{Z}})$ with
  $c_0=\tilde f_0(w^{[m_0]}_2)$  and we consider ${b}\in \mathbb{C}^{m_0}$, then
  the MGM (standard V-cycle) converges to the
  solution of $A_{m_0}x=b$ and is optimal \emph{(}in the sense of
  Definition {\rm \ref{defn:opt}}\emph{)}.
\end{thm}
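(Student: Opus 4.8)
The plan is to assemble the three structural ingredients that have already been established and check that they combine into the two requirements of Definition \ref{defn:opt}. First I would observe that with the Strang-corrected generating function $f_0 = \tilde f_0 + c_0\Chi_{2\pi\mathbb{Z}}$ one has $A_{m_0}=C_{m_0}(f_0)$ positive definite, so the MGM iteration matrix $MGM_0$ is well defined and the recursive identities of \S\ref{sec:theory} apply at every level. By Lemma \ref{lem:qpsi}, the choice $p_s(x)=[1+\cos(x)]^q$ forces every coarse generating function to retain the form $f_s = [1-\cos x]^q\psi_s + c_s\Chi_{2\pi\mathbb{Z}}$ with $\psi_s$ positive, and moreover $p_s$ vanishes to order $2q$ at the mirror point $\pi$ of the zero $x^0=0$; hence $p_s$ satisfies the requirements \eqref{eq:condiz-0bis-MGM} and \eqref{eq:condiz-bis-MGM} of Definition \ref{def:p-MGM} at level $s$. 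This is exactly the hypothesis of Proposition \ref{prop:cond-approx}, which yields the approximation property \eqref{AMGipo2.3} with a constant $\gamma_s>0$ independent of $m_s$; the explicit computation preceding \eqref{inf-min2} identifies the optimal value $\gamma_s^* = M_{\psi_s}/m_{\psi_s}^2$.

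Next I would pin down the smoothing side. With $V_{s,\mathrm{post}} = I_{m_s} - A_{m_s}/\|f_s\|_\infty$, i.e. $\omega_s = 1/\|f_s\|_\infty \le 2/\|f_s\|_\infty$, Proposition \ref{prop:cond-smoother} gives the post-smoothing property \eqref{AMGipo2.2} with $\beta_s^* = \omega_s(2-\omega_s\|f_s\|_\infty) = 1/\|f_s\|_\infty$. Since $\tilde f_s = [1-\cos x]^q\psi_s$ and $1-\cos x \le 2$, we get $\|f_s\|_\infty \le 2^q M_{\psi_s}$, whence $\beta_s^* \ge 1/(2^q M_{\psi_s})$ and the bound \eqref{inf-min2}, namely $\beta_s^*/\gamma_s^* \ge 2^{-q}\mu_\infty^{-2}(\psi_s)$. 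At this stage the whole question reduces to a uniform (in $s$) bound on $\mu_\infty(\psi_s)=M_{\psi_s}/m_{\psi_s}$, and this is precisely the content of Proposition \ref{prop:convergenza}: the $\psi_s$ converge uniformly to a positive polynomial $\psi_\infty$ via the Perron--Frobenius analysis of the operator $\Phi_{q,p_s}$, so there is an absolute constant $L$ with $\mu_\infty(\psi_s)\le L$ for all $s$. Consequently $\delta_{\mathrm{post}} = \inf_{m_0}\min_{0\le s<s_{\min}} \beta_s^*/\gamma_s^* \ge (2^q L^2)^{-1} > 0$, independent of $m_0$ and $s_{\min}$.

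With $\delta_{\mathrm{post}}>0$ in hand (and $\alpha_s,\gamma_s\ge 0$ trivially), Theorem \ref{teo:conv-MGM} — invoked through the splitting \eqref{AMGipo1.b-bis}--\eqref{AMGipo2} and the inf-min criterion \eqref{eq:inf-min}, which is purely algebraic and does not affect the proof of that theorem — gives $\|MGM_0\|_{A_{m_0}} \le \sqrt{1-\delta_{\mathrm{post}}} \le \sqrt{1-(2^q L^2)^{-1}} < 1$, a contraction factor bounded away from $1$ uniformly in $m_0$. This settles requirement~2 of Definition \ref{defn:opt}: a fixed number of V-cycles suffices to reach any prescribed accuracy. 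For requirement~1 I would invoke the cost remarks of \S\ref{sec:introdution}: each $A_{s+1}=P_{s+1}^s A_s (P_{s+1}^s)^T = C_{s+1}(f_{s+1})$ is again a banded DCT-III matrix (bandwidth controlled by $\deg f_{s+1}$, uniformly bounded by Lemma \ref{lem:qpsi}), so a matrix-vector product at level $s$ costs $O(m_s)$; the projector $P_{s+1}^s = T_{s+1}^s C_s(p_s)$ likewise costs $O(m_s)$ to apply; and since $m_s = m_0/2^s$ the geometric sum over levels is $O(m_0)$, proportional to the cost of one multiplication by $A_{m_0}$. Combining the two items yields optimality.

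The main obstacle in this argument is the uniform control of $\mu_\infty(\psi_s)$, i.e. the step that is packaged into Proposition \ref{prop:convergenza}: the bare recursion $\psi_{s+1}=\Phi_{q,p_s}(\psi_s)$ only tells us each $\psi_s$ is positive, not that the ratio $M_{\psi_s}/m_{\psi_s}$ stays bounded as $s\to\infty$; without that, $\gamma_s$ could blow up and $\delta_{\mathrm{post}}$ degenerate to $0$, destroying level-independence. The resolution is structural — rewriting $\Phi_{q,p}$ in the finite-dimensional Fourier basis as the irreducible nonnegative matrix $\bar\Phi_q$ of \cite{ADS-SIMAX-2004} and applying Perron--Frobenius — but it is the genuine crux; everything else is either a direct citation of Propositions \ref{prop:cond-smoother}--\ref{prop:cond-approx} or the routine cost bookkeeping above.
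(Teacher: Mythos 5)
Your argument is correct and follows essentially the same route as the paper's proof: factor $\tilde f_0=[1-\cos(x)]^q\psi_0$, combine Propositions \ref{prop:cond-smoother} and \ref{prop:cond-approx} with the bound \eqref{inf-min2} to reduce the inf--min condition \eqref{eq:inf-min} to the uniform boundedness of $\mu_\infty(\psi_s)$, and settle that via Proposition \ref{prop:convergenza}. The paper's own proof is just a terser statement of this same chain (with the cost bookkeeping deferred to \S\ref{sec:conclusion}), so there is nothing to add.
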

\begin{proof}
Under the quoted assumptions it holds that $\tilde f_0(x) =
\left[1-\cos(x)\right]^q\psi_0(x)$ for some positive polynomial
$\psi_0(x)$. Therefore, it is enough to observe that the optimal
convergence of MGM as stated in Theorem \ref{teo:conv-MGM} is
implied by the inf--min condition (\ref{eq:inf-min}). Thanks to
(\ref{inf-min2}), the latter is guaranteed if the quantities
$\mu_\infty(\psi_s)$ are uniformly bounded and this holds true
according to Proposition \ref{prop:convergenza}.
\end{proof}
Now, we consider the case of a generating function $f_0$ with a
unique zero at $x^0=\pi$, this being particularly important in
applications since the discretization of certain integral
equations  leads to matrices belonging to this class.  For
instance, the signal restoration leads to the case of $f_0(\pi) =
0$, while for the super-resolution problem and image restoration
$f_0(\pi, \pi)=0$ is found \cite{CDST-SPIE-2002}. \\
By virtue of Lemma \ref{lemma:descrizione-zero} we simply have
that the generating function $f_1$ related to the first projected
matrix uniquely vanishes at $0$, i.e. at the first level the MGM
projects a discretized integral problem, into another which is
spectrally and structurally equivalent to a discretized
differential problem.\\
With respect to the optimal convergence, we have that Theorem
\ref{teo:conv-MGM} holds true with $\delta = \min\{\delta_0,
\bar{\delta}\}$ since
$\delta$ results to be a constant and independent of ${m_0}$. 
More precisely, $\delta_0$ is directly related to the finest level
and $\bar{\delta}$ is given by the inf-min condition of the
differential problem obtained at the coarser levels. The latter
constant value has been previously shown, while the former can be
proven as follows:  we are dealing with
$f_0(x)=(1+\cos(x))^q\psi_0(x)$ and according to Definition
\ref{def:p-MGM} we choose $\tilde p_0(x)=p_0(x) +d_0\Chi_{2\pi
\mathbb{Z}}$ with $p_0(x)=(1+\cos(x))^{q+1}$ and
$d_0=p_0(w^{[m_0]}_2)$.\\
Therefore, an upper bound for the left-hand side $\tilde R(x)$ in
(\ref{eq:gamma-B-MGM-pi}) is obtained as
\[
\tilde R(x) \le \frac{M_{\psi_0}}{m_{\psi_0}^2},
\]
i.e. we can consider $\gamma_0=M_{\psi_0}/m_{\psi_0}^2$ and so
that a value $\delta_0$ independent of $m_0$ is found.
\section{Numerical experiments} \label{sec:numexp}
Hereafter, we give numerical evidence of the  convergence
properties claimed in the previous sections, both in the case of
proposed TGM and MGM (standard V-cycle), for two types of DCT-III
systems with generating functions having zero at $0$ (differential
like problems) and at
$\pi$ (integral like problems).\\
The projectors $P_{s+1}^{s}$ are chosen as described in \S
\ref{sec:multi-coseni} in \S \ref{sec:vcycle} and the Richardson
smoothing iterations are used twice in each iteration with
$\omega=2/\|f\|$ and $\omega=1/\|f\|$ respectively.  The iterative
procedure is performed until the Euclidean norm of the relative
residual at dimension $m_0$ is greater than $10^{-7}$. Moreover,
in the V-cycle, the exact solution of the system is found by a
direct solver when the coarse grid dimension equals to $16$
($16^2$ in the additional two-level tests).
\subsection{Case $\mathbf{x^0=0}$ (differential like problems)}
\label{subsez:x0=0}
First, we consider the case $A_m=C_m(f_0)$ with
$f_0(x)=[2-2\cos(x)]^q$, i.e. with a unique zero at $x^0=0$ of
order $2q$.\\
As previously outlined, the matrix $C_m(f_0)$ is singular, so that
the solution of the rank-$1$ corrected system is considered, whose
matrix is given by  $C_m(f_0)+({f_0(\pi/m)}/{m}) ee^T$, with
$e=[1,\ldots, 1]^T$. Since the position of the zero $x^0=0$ at the
coarser levels is never shifted,  then the function
$p_s(x)=[2-2\cos(\pi-x)]^r$ in
the projectors is the same at all the subsequent levels $s$. \\
To test TGM/MGM linear convergence with rate independent of the
size $m_0$ we tried for different $r$: according to
(\ref{eq:condiz}), we must choose $r$ at least equal to $1$ if
$q=1$ and at least equal to $2$ if $q=2, 3$, while according to
(\ref{eq:condiz-MGM}) we must always choose $r$ equal to $q$.
The results are reported in Table \ref{table:1dtwo_multigrid_0}.\\
By using tensor arguments, the previous results plainly extend to
the multilevel case. In Table \ref{table:2dtwo_multigrid_0} we
consider the case of generating function $f_0(x,y)=f_0(x)+f_0(y)$,
that arises in the uniform finite difference discretization of
elliptic constant coefficient differential equations on a square
with Neumann boundary conditions, see e.g \cite{ST-LAA-1999}.
\begin{table}
\caption{Twogrid/Multigrid - $1D$ Case: $f_0(x)=[2-2\cos(x)]^q$
and
$p(x)=[2-2\cos(\pi-x)]^r$. 
}
\begin{center}
\begin{footnotesize}
\begin{tabular}[c]{|l |c| c|c| c|c| c |}\hline
\multicolumn{6}{|c|}{TGM}\\ \hline \multicolumn{1}{|c|}{}
              & \multicolumn{1}{|c|}{$q=1$}
              & \multicolumn{2}{|c|}{$q=2$}
              & \multicolumn{2}{|c|}{$q=3$}\\
\multicolumn{1}{|c|}{$m_0$}
              & \multicolumn{1}{|c|}{$r=1$}
              & \multicolumn{1}{|c|}{$r=1$}
              & \multicolumn{1}{|c|}{$r=2$}
              & \multicolumn{1}{|c|}{$r=2$}
              & \multicolumn{1}{|c|}{$r=3$}\\ \hline \hline
16   &  7 & 15 & 13 & 34 & 32 \\
32   &  7 & 16 & 15 & 35 & 34 \\
64   &  7 & 16 & 16 & 35 & 35 \\
128  &  7 & 16 & 16 & 35 & 35 \\
256  &  7 & 16 & 16 & 35 & 35 \\
512  &  7 & 16 & 16 & 35 & 35 \\
\hline
\end{tabular}
\end{footnotesize}
\begin{footnotesize}
\begin{tabular}[c]{|l |c| c|c| c|c|}\hline
\multicolumn{6}{|c|}{MGM}\\ \hline \multicolumn{1}{|c|}{}
              & \multicolumn{1}{|c|}{$q=1$}
              & \multicolumn{2}{|c|}{$q=2$}
              & \multicolumn{2}{|c|}{$q=3$}\\
\multicolumn{1}{|c|}{$m_0$}
              & \multicolumn{1}{|c|}{$r=1$}
              & \multicolumn{1}{|c|}{$r=1$}
              & \multicolumn{1}{|c|}{$r=2$}
              & \multicolumn{1}{|c|}{$r=2$}
              & \multicolumn{1}{|c|}{$r=3$}\\ \hline \hline
16   &  1 & 1  & 1  & 1  & 1  \\
32   &  7 & 16 & 15 & 34 & 32 \\
64   &  7 & 17 & 16 & 35 & 34 \\
128  &  7 & 18 & 16 & 35 & 35 \\
256  &  7 & 18 & 16 & 35 & 35 \\
512  &  7 & 18 & 16 & 35 & 35 \\
\hline
\end{tabular}
\label{table:1dtwo_multigrid_0}
\end{footnotesize}
\end{center}
\end{table}
\begin{table}
\caption{Twogrid/Multigrid - $2D$ Case:
$f_0(x,y)=[2-2\cos(x)]^q+[2-2\cos(y)]^q$ and
$p(x,y)=[2-2\cos(\pi-x)]^r+[2-2\cos(\pi-y)]^r$.
}
\begin{center}
\begin{footnotesize}
\begin{tabular}[c]{|l |c| c|c| c|c| c |}\hline
\multicolumn{6}{|c|}{TGM}\\ \hline \multicolumn{1}{|c|}{}
              & \multicolumn{1}{|c|}{$q=1$}
              & \multicolumn{2}{|c|}{$q=2$}
              & \multicolumn{2}{|c|}{$q=3$}\\
\multicolumn{1}{|c|}{$m_0$}
              & \multicolumn{1}{|c|}{$r=1$}
              & \multicolumn{1}{|c|}{$r=1$}
              & \multicolumn{1}{|c|}{$r=2$}
              & \multicolumn{1}{|c|}{$r=2$}
              & \multicolumn{1}{|c|}{$r=3$}\\ \hline \hline
16$^2$\!\!\!   & 15 & 34 & 30 & -  & - \\ 
32$^2$\!\!\!   & 16 & 36 & 35 & 71 & 67 \\
64$^2$\!\!\!   & 16 & 36 & 36 & 74 & 73 \\
128$^2$\!\!\!  & 16 & 36 & 36 & 74 & 73 \\
256$^2$\!\!\!  & 16 & 36 & 36 & 74 & 73 \\
512$^2$\!\!\!  & 16 & 36 & 36 & 74 & 73 \\
\hline
\end{tabular}
\end{footnotesize}
\begin{footnotesize}
\begin{tabular}[c]{|l |c| c|c| c|c|}\hline
\multicolumn{6}{|c|}{MGM}\\ \hline \multicolumn{1}{|c|}{}
              & \multicolumn{1}{|c|}{$q=1$}
              & \multicolumn{2}{|c|}{$q=2$}
              & \multicolumn{2}{|c|}{$q=3$}\\
\multicolumn{1}{|c|}{$m_0$}
              & \multicolumn{1}{|c|}{$r=1$}
              & \multicolumn{1}{|c|}{$r=1$}
              & \multicolumn{1}{|c|}{$r=2$}
              & \multicolumn{1}{|c|}{$r=2$}
              & \multicolumn{1}{|c|}{$r=3$}\\ \hline \hline
16$^2$\!\!\!   &  1  & 1 & 1  &  1  & 1 \\
32$^2$\!\!\!   & 16 & 36 & 35 &  71 & 67 \\
64$^2$\!\!\!   & 16 & 36 & 36 &  74 & 73 \\
128$^2$\!\!\!  & 16 & 36 & 36 &  74 & 73 \\
256$^2$\!\!\!  & 16 & 37 & 36 &  74 & 73 \\
512$^2$\!\!\!  & 16 & 37 & 36 &  74 & 73 \\
\hline
\end{tabular}
\label{table:2dtwo_multigrid_0}
\end{footnotesize}
\end{center}
\end{table}
\subsection{Case $\mathbf{x^0=\pi}$ (integral like
problems)}\label{subsec:pi}
DCT III matrices $A_{m_0}=C_{m_0}(f_0)$ whose generating function
shows a unique zero at $x^0=\pi$ are encountered in solving
integral equations, for instance in image restoration problems
with Neumann (reflecting) boundary conditions \cite{NCT-SISC-1999}.\\
According to Lemma \ref{lemma:descrizione-zero}, if $x^0=\pi$,
then the generating function $f_1$ of the coarser matrix
$A_{m_1}=C_{m_1}(f_1)$, $m_1=m_0/2$ has a unique zero at $0$,
whose order equals the order of $x^0=\pi$ with respect to $f_0$
plus two.\\
It is worth stressing that in such a case the projector at the
first level is singular so that its rank-$1$ Strang correction  is
considered. This choice gives rise in a natural way to the
rank-$1$ correction considered in \S \ref{subsez:x0=0}. Moreover,
starting from the second coarser level, the new location of the
zero is never shifted from $0$. \\ In Table
\ref{table:1d2dtwo_multigrid_pi} are reported the numerical
results both in the unilevel and two-level case.
\begin{table}
\caption{Twogrid/Multigrid - $1D$ Case: $f_0(x)=2+2\cos(x)$ and
$p_0(x)=2-2\cos(\pi-x)$ and $2D$ Case:
$f_0(x,y)=4+2\cos(x)+2\cos(y)$ and
$p_0(x,y)=4-2\cos(\pi-x)-2\cos(\pi-y)$.
}
\begin{center}
\begin{footnotesize}
\begin{tabular}[c]{|l |c | c|}\hline
\multicolumn{1}{|c|}{$1D$}
              & \multicolumn{1}{|c|}{TGM}
              & \multicolumn{1}{|c|}{MGM}\\
\hline \hline
16\!\!\!   &  15  & 1   \\
32\!\!\!   &  14 & 14  \\
64\!\!\!   &  12 & 13  \\
128\!\!\!  &  11 & 13 \\
256\!\!\!  &  10 & 12  \\
512\!\!\!  &  8  & 10   \\
\hline
\end{tabular}
\end{footnotesize}
\begin{footnotesize}
\begin{tabular}[c]{|l |c | c|}\hline
\multicolumn{1}{|c|}{$2D$}              &
\multicolumn{1}{|c|}{TGM}
              & \multicolumn{1}{|c|}{MGM}\\
\hline \hline
16$^2$\!\!\!   & 7 & 1   \\
32$^2$\!\!\!   & 7 & 7  \\
64$^2$\!\!\!   & 7 & 7  \\
128$^2$\!\!\!  & 7 & 6  \\
256$^2$\!\!\!  & 7 & 6  \\
512$^2$\!\!\!  & 7 & 6  \\
\hline
\end{tabular}
\label{table:1d2dtwo_multigrid_pi}
\end{footnotesize}
\end{center}
\end{table}
%
\section{Computational costs and conclusions} \label{sec:conclusion}
Some remarks about the computational costs are required in order
to highlight the optimality of the proposed procedure.\\
Since the matrix $C_{m_s}(p)$ appearing in the definition of
$P_{s+1}^s$ is banded, the cost of a matrix vector product
involving $P_{s+1}^s$ is $O(m_s)$. Therefore, the first condition
in Definition \ref{defn:opt} is satisfied. In addition, notice
that the matrices at every level (except for the coarsest) are
never formed since we need only to store the $O(1)$ nonzero
Fourier coefficients of the related generating function at each
level for matrix-vector multiplications. Thus,
the memory requirements are also very low.\\
With respect to the second condition in Definition \ref{defn:opt}
we stress that the representation of
$A_{m_{s+1}}=C_{m_{s+1}}(f_{s+1})$ can be obtained formally in
$O(1)$ operations by virtue of (\ref{eq:hatf}). In addition, the
roots of $f_{s+1}$ and their orders are obtained according to
Lemma \ref{lemma:descrizione-zero} by knowing the roots of $f_{s}$
and their orders.
Finally, each iteration of TGM costs $O(m_0)$ operations as
$A_{m_0}$ is banded. In conclusion, each iteration of the proposed
TGM requires $O(m_0)$ operations.\\
With regard to MGM, optimality is reached since we have proven
that there exists $\delta$ is independent from both $m$ and
$s_{\min}$ so that the number of  required iterations results
uniformly bounded by a constant irrespective of the problem size.
In addition, since each iteration has a computational cost
proportional to matrix-vector product, Definition \ref{defn:opt}
states that such a kind of MGM is \textit{optimal}.\par As a
conclusion, we observe that the reported numerical tests in \S
\ref{sec:numexp} show that the requirements on the order of zero
in the projector could be weakened. Future works will deals with
this topic and with the extension of the convergence analysis in
the case of a general location of the zeros of the generating
function.
%

\end{document}